\newtheorem{theorem}{Theorem}
\newtheorem{cor}[theorem]{Corollary}
\newtheorem{lemma}[theorem]{Lemma}
\newtheorem{proposition}[theorem]{Proposition}
\newtheorem{conj}[theorem]{Conjecture}
\theoremstyle{remark}
\def\la{\lambda}
\def\l{\ell}
\def\Z{\mathbb Z}
\def\N{\mathbb N}
\def\R{\mathbb R}
\def\bee{\begin{enumerate}}
\def\ene{\end{enumerate}}
\def\bei{\begin{itemize}}\def\eni{\end{itemize}}
\def\beq{\begin{equation}}\def\enq{\end{equation}}
\def\beqs{\begin{equation*}}\def\enqs{\end{equation*}}
\def\C{\mathbb C}
\def\N{\mathbb N}
\def\R{\mathbb R}
\def\Q{\mathbb Q}\def\T{\mathbb T}
\def\Z{\mathbb Z}
\def\al{\alpha}
\def\phi{\varphi}
\def\r{\mathbf r}\def\z{\mathbf z}\def\0{\mathbf 0}\def\s{\mathbf s}
\def\c{\mathbf c}\def\j{\mathbf j}
\def\GL{\operatorname{GL}}
\def\max{\operatorname{max}}
\def\dim{\operatorname{dim}}
\begin{document}
\title[Closed sets of Mahler measures]
{Closed sets of Mahler measures}
\author{ Chris Smyth}
\address{School of Mathematics and Maxwell Institute for Mathematical Sciences\\
University of Edinburgh\\
Edinburgh EH9 3FD\\
Scotland, U.K.}
\email{C.Smyth@ed.ac.uk}
\subjclass[2010]{11R06}
\date{22 August 2017}
\keywords{Mahler measure, closure}

\begin{abstract}
Given a $k$-variable Laurent polynomial $F$,  any $\l\times k$ integer matrix $A$ naturally defines an $\l$-variable Laurent polynomial $F_A.$ I prove that for fixed $F$ the set $\mathcal M(F)$  of all the logarithmic   
Mahler measures $m(F_A)$  of $F_A$ for all $A$ is a closed subset of the real line.  Moreover, the matrices $A$ can be assumed to be of a special form, which I call {\it Saturated Hermite Normal Form}. Furthermore, if $F$ has integer coefficients and $\mathcal M(F)$ contains $0,$ then $0$ is an isolated point of this set.

I also show that, for a given bound $B>0$,  the set ${\mathcal M}_B$ of all Mahler measures of integer polynomials in any number of variables and having length (sum of the moduli of its coefficients) at most $B$ is closed. Again, $0$ is an isolated point of ${\mathcal M}_B$.

These results constitute evidence consistent with a conjecture of Boyd from 1980 to the effect that the union $\mathcal L$ of all sets ${\mathcal M}_B$ for $B>0$ is closed, with $0$ an isolated point of $\mathcal L$.
\end{abstract}

\maketitle

\section{Introduction}\label{S-introduction}

The Mahler measure $M(f)$ of a polynomial $f(z)=a\prod_i(z-\al_i)\in\C[z]$ is defined by 
$M(f):=|a|\prod_i \max(1,|\al_i|)$. It arose first in a paper of D.H. Lehmer \cite{Le},
as a way of estimating the growth rate of integer sequences defined by a  linear recurrence. (Lehmer was using such sequences to generate primes.) Later, Kurt Mahler \cite{Ma64} used it to bound the discriminant of a polynomial from above, and the minimum spacing of the roots of a polynomial from below. As noted for instance in \cite{Ma60}, $m(f):=\log M(f)$ has the integral representation via Jensen's Theorem as 
\[
m(f)=\int_0^1 \log|f(e^{2\pi it})|dt.
\]
This formula has the advantage that it immediately suggests a generalisation of Mahler measure to polynomials in several variables. So, following  Mahler \cite{Ma62},
let $k\ge 1$, $\z_k=(z_1,\dots,z_k)$ and $F(\z_k)$ be a nonzero Laurent polynomial with complex coefficients. Then its (logarithmic) Mahler measure $m(F)$ is defined as 
\begin{equation}\label{E-1}
m(F)=\int_0^1\cdots\int_0^1\log|F(e^{2\pi it_1},\dots,e^{2\pi it_k})|\,dt_1\cdots dt_k
\end{equation}
 Its (classical) Mahler measure is then $M(F)=\exp(m(F))$; for our purposes here it is slightly more convenient to work with $m(F)$ rather than $M(F).$

Next,  given $\l\ge 0$ and an $\l\times k$ matrix $A=(a_{ij})\in\Z^{\l\times k}$, define, following \cite{Sc}, the $k$-tuple $\z_\l^A$ by
\beq\label{E-0}
\z_\l^A= (z_1,\dots,z_\ell)^A=(z_1^{a_{11}}\!\cdots z_\ell^{a_{\ell 1}},\ldots,z_1^{a_{1k}}\!\cdots z_\ell^{a_{\ell k}})
\enq
(which is $(1,1,\dots,1)\in\Z^k$ when $\l=0$) and $F_A(\z_\l)=F(\z_\l^A)$, a polynomial in $\ell$ variables $z_1,\ldots,z_\ell$. Then $m(F_A)$ is defined by \eqref{E-1} with $F=F_A$ and $k=\ell$. Denote  by $\mathcal P(F)$ the set 

\[
\mathcal P(F):=\{F_A\, : \, A\in\Z^{\l\times k}, \l\ge 0 \},
\]

 and by $\mathcal M(F)$ the set
\[
\mathcal M(F):=\{m(F_A)\, : \, F_A\in\mathcal P(F), F_A\ne 0 \}.
\]

In particular, taking $\l=1$, we write $A$ as the row vector $\r=(r_1,\dots,r_k)\in\Z^k$, and so
 $F_{\r}(z)=F(z^{r_1},\dots,z^{r_k})$. Denote by ${\mathcal M}_1(F)$  the set $\{m(F_{\r})\mid \r\in\Z^k\}$, with $m(F_{\r})$ defined by \eqref{E-1} with $F=F_\r$ and $k=1$.

 Our first result is as follows.

\begin{theorem} \label{T-1} Let $F$ be a nonzero Laurent polynomial with complex coefficients. Then the set $\mathcal M(F)$ is a closed subset of $\R$. Moreover, it is the closure $\overline{{\mathcal M}_1(F)}$ of ${\mathcal M}_1(F)$ in $\R$.
\end{theorem}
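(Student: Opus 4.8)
The plan is to deduce both assertions---closedness of $\mathcal M(F)$ and the identity $\mathcal M(F)=\overline{{\mathcal M}_1(F)}$---from the single statement that $\mathcal M(F)$ is closed. Indeed, ${\mathcal M}_1(F)\subseteq\mathcal M(F)$ is trivial, and $\mathcal M(F)\subseteq\overline{{\mathcal M}_1(F)}$ follows from Lawton's theorem (the Boyd--Lawton limit formula): given $A\in\Z^{\l\times k}$ with $F_A\ne 0$, one has $(F_A)_{\s}=F_{\s A}$ for every row vector $\s\in\Z^{\l}$, so applying Lawton's theorem to the $\l$-variable polynomial $F_A$ produces a sequence $\s^{(n)}$ with $F_{\s^{(n)}A}\ne 0$ and $m(F_{\s^{(n)}A})\to m(F_A)$. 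Hence ${\mathcal M}_1(F)\subseteq\mathcal M(F)\subseteq\overline{{\mathcal M}_1(F)}$; taking closures, once $\mathcal M(F)$ is known to be closed we get $\overline{{\mathcal M}_1(F)}\subseteq\mathcal M(F)$, and equality follows. So the whole task reduces to showing $\mathcal M(F)$ closed.

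I would attack this geometrically. For $A\in\Z^{\l\times k}$ the substitution $\z_\l\mapsto\z_\l^A$ is a continuous homomorphism $\phi_A\colon\T^{\l}\to\T^{k}$ whose image $H_A$ is a subtorus of $\T^k$ of dimension $\operatorname{rank}A$; since $\phi_A$ carries Haar measure to the normalized Haar measure $\mu_{H_A}$ of $H_A$, we get $m(F_A)=\int_{\T^{\l}}\log|F\circ\phi_A|=\int_{H_A}\log|F|\,d\mu_{H_A}$, a quantity that depends only on the subtorus $H_A$ and is finite precisely when $F|_{H_A}\not\equiv 0$. As every subtorus of $\T^k$ arises as some $H_A$, this identifies $\mathcal M(F)$ with $\{\int_T\log|F|\,d\mu_T : T\text{ a subtorus of }\T^k,\ F|_T\not\equiv 0\}$, and ${\mathcal M}_1(F)$ with the same set over subtori of dimension $\le 1$. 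Now suppose $m(F_{A_n})\to L$ and put $T_n=H_{A_n}$. The hyperspace of closed subsets of the compact group $\T^k$ is compact in the Hausdorff metric, so after passing to a subsequence $T_n\to T$. The limit $T$ is a closed subgroup containing $\mathbf 1$, and in fact it is connected---hence a subtorus: if some character $\chi$ of $\T^k$ had $\chi(T)$ finite and nontrivial, then $\chi(T_n)\to\chi(T)$, which is impossible since each $\chi(T_n)$, being a connected subgroup of $\T$, is $\{1\}$ or $\T$, and neither can converge in the Hausdorff metric to a nontrivial finite subgroup of $\T$. Moreover $F|_T\not\equiv 0$, for otherwise $\max_{T_n}|F|\to\max_T|F|=0$, forcing $m(F_{A_n})\le\log\max_{T_n}|F|\to-\infty$; so $T=H_A$ with $F_A\ne 0$, and it remains to prove $m(F_{A_n})=\int_{T_n}\log|F|\,d\mu_{T_n}\to\int_T\log|F|\,d\mu_T=m(F_A)$, which then gives $L=m(F_A)\in\mathcal M(F)$.

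Two ingredients are needed. First, $\mu_{T_n}\to\mu_T$ in the weak-$*$ topology: testing against characters and invoking Stone--Weierstrass, this amounts to the claim that for each character $\chi$ one has, eventually, $\chi|_{T_n}\equiv 1$ if and only if $\chi|_T\equiv 1$, which again follows from $\chi(T_n)\to\chi(T)$ together with the dichotomy for connected subgroups of $\T$. Second---and this is the point---a uniform integrability estimate
\[
\lim_{M\to\infty}\ \sup_{n}\ \int_{T_n}\Bigl(\log\tfrac1{|F|}-M\Bigr)^{+}d\mu_{T_n}=0 ,
\]
which is exactly what upgrades weak-$*$ convergence (valid only against bounded continuous functions, e.g.\ the truncations $\max(\log|F|,-M)$) to convergence of $\int\log|F|$. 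One convenient organisation: a short lemma shows that any subtorus sufficiently Hausdorff-close to $T$ is contained in $T$ (the finitely many characters generating the annihilator of $T$ are then forced trivial on it), so for large $n$ we have $T_n\subseteq T$; identifying $T$ with a torus $\T^{d}$ then reduces everything to the statement that subtori equidistributing in $\T^{d}$ have the Mahler measures of $F|_T$ along them converging to $m(F|_T)$---a several-variable form of Lawton's theorem, which in turn rests on the same uniform estimate.

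The main obstacle is that uniform estimate: a single constant must bound the $\mu_{T_n}$-mass of the sublevel set $\{|F|<e^{-M}\}$ simultaneously for all $T_n$ in the sequence. This is a strengthening of the technical core of Lawton's proof of the Boyd--Lawton formula, and I would prove it by induction on the number of variables $k$: writing $F$ as a polynomial in one coordinate with Laurent-polynomial coefficients, one controls the bad set on $T_n$ in terms of the analogous quantity for the leading coefficient---a polynomial in fewer variables, handled by the inductive hypothesis after a further passage to a Hausdorff limit---together with an elementary one-variable estimate (via Jensen's formula, or Chebyshev) bounding how often a monic polynomial of bounded degree can be small on the circle. Throughout, the Saturated Hermite Normal Form supplies explicit canonical matrix representatives of the subtori, cleanly separating the blocks of $A_n$ that stay bounded (and stabilise along a subsequence) from the blocks that grow (and drive the equidistribution), which makes all the passages to the limit concrete and bookkeepable. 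The genuinely delicate point is that $F|_T$ may itself vanish on a proper subvariety of $T$, so the bad sets of the $T_n$ can cluster there; keeping the estimate uniform across this clustering is the heart of the matter.
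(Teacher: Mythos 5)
Your soft framework is sound: the reduction to closedness of $\mathcal M(F)$ via the Boyd--Lawton formula, the identification $m(F_A)=\int_{H_A}\log|F|\,d\mu_{H_A}$ over the subtorus $H_A$, the Hausdorff-compactness of subtori, the connectedness of the limit $T$, the lemma that subtori near $T$ lie inside $T$, the exclusion of $F|_T\equiv 0$, and the weak-$*$ convergence $\mu_{T_n}\to\mu_T$ are all correct (and are a genuinely different, more geometric packaging than the paper, which never leaves the language of matrices). But the proof has a hole exactly where you say the heart of the matter is: the passage from weak-$*$ convergence to convergence of $\int\log|F|$, i.e.\ the uniform integrability of $\log|F|$ along the measures $\mu_{T_n}$. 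What you need there is a \emph{several-variable} strengthening of Lawton's theorem (arbitrary-dimensional subtori equidistributing in $T$), which is strictly stronger than the one-variable-specialization statement the paper cites as Proposition \ref{P-2}, and your plan for it --- ``induction on $k$, control the bad set via the leading coefficient plus a one-variable smallness estimate'' --- is only a gesture at redoing Lawton's (genuinely delicate) proof in greater generality; as written it does not resolve the clustering of the bad sets on the zero locus of $F|_T$, which you yourself flag. So the central analytic step is asserted, not proved.

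The gap is avoidable inside your own framework, in two ways. First, your opening reduction overshoots: to get both assertions you only need $\overline{{\mathcal M}_1(F)}\subseteq\mathcal M(F)$ (together with the density ${\mathcal M}(F)\subseteq\overline{{\mathcal M}_1(F)}$ you already proved), so it suffices to treat sequences of \emph{one-dimensional} subtori $C_n\to T$; by your containment lemma $C_n\subseteq T$ eventually, and after identifying $T\cong\T^d$ the exponent vectors of $C_n$ have $q\to\infty$, so the cited Lawton theorem applied to $F|_T$ finishes the proof --- this is essentially the paper's argument (its Case 3 induction with the matrix $U$ spanning $L_{\mathbf s}$ is exactly ``restrict $F$ to the limit subtorus'') in geometric language. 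Second, even your multi-dimensional convergence statement can be deduced from the one-dimensional Lawton theorem by a diagonal argument: inside each $T_n$ choose a circle $C_n$ with $q$ so large that $d_H(C_n,T_n)<1/n$ and $\bigl|\int_{C_n}\log|F|-\int_{T_n}\log|F|\,d\mu_{T_n}\bigr|<1/n$ (Lawton applied to $F|_{T_n}$), then apply Lawton once more to $F|_T$ along $C_n\to T$. Either repair would close the argument without proving any new uniform estimate; as submitted, however, the proposal rests on an unproved strengthening of the key cited result.
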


So $\overline{{\mathcal M}_1(F)}={\mathcal M}(F)=\overline{{\mathcal M}(F)}.$
Thus not only are all the $m(F_A)$  in 
$\overline{{\mathcal M}_1(F)}$, but also they are the {\it only} elements of $\overline{{\mathcal M}_1(F)}$.

In Proposition \ref{P-lbd} a lower bound for ${\mathcal M}(F)$ is given, which implies that ${\mathcal M}(F)\subseteq [0,\infty)$ when $F$ has integer coefficients. In this case  
all polynomials in ${\mathcal P}(F)$ also have integer coefficients, and we can state our next result.

\begin{theorem} \label{T-1.5} Suppose that $k\ge 1$, that  the nonzero  Laurent polynomial $F(\z_k)$ has integer coefficients and that $0\in\mathcal M(F)$. Then $0$ is an isolated point of $\mathcal M(F)$.
\end{theorem}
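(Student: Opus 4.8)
The plan is to use Theorem~\ref{T-1} to reduce to the one-variable substitutions and then apply Lehmer-type results. So suppose for contradiction that $0$ is not isolated in $\mathcal M(F)$. By Theorem~\ref{T-1}, $\mathcal M(F)=\overline{\mathcal M_1(F)}$, so there is a sequence of row vectors $\r^{(n)}\in\Z^k$ with $m(F_{\r^{(n)}})\to 0$ and $m(F_{\r^{(n)}})>0$ for all $n$ (if infinitely many of the $F_{\r^{(n)}}$ had measure exactly $0$ we would still need to produce a nearby positive value, which is where the density in Theorem~\ref{T-1} comes in — alternatively one works directly with a sequence from $\mathcal M_1(F)$ converging to $0$). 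Each $F_{\r^{(n)}}(z)=F(z^{r^{(n)}_1},\dots,z^{r^{(n)}_k})$ is, after clearing the lowest power of $z$, an honest integer polynomial $g_n(z)\in\Z[z]$ with $M(g_n)=M(F_{\r^{(n)}})\to 1^+$.

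Next I would control the complexity of the $g_n$. The length (sum of absolute values of coefficients) of $g_n$ is at most the length $L$ of $F$, a fixed constant, since substitution $z_j\mapsto z^{r_j}$ and multiplication by a monomial can only collapse or shift coefficients, never create new ones with larger total absolute value. Hence every $g_n$ lies in the set of integer polynomials of length $\le L$. Now invoke a boundedness/Northcott-type argument: among integer polynomials of bounded length and with Mahler measure bounded above (here by $2$, say, for $n$ large), either there are only finitely many up to the obvious equivalences (multiplication by $\pm z^j$, inversion $z\mapsto 1/z$), or all sufficiently small positive values of $m$ are bounded away from $0$. The key input is that an integer polynomial $g$ with $M(g)$ close to $1$ must have all its roots close to the unit circle and bounded degree-by-length, so by Kronecker's theorem the ``limiting'' polynomials are products of cyclotomics times a power of $z$, which have $M=1$; and a compactness argument on the finitely many coefficient patterns shows $m(g_n)$ cannot approach $0$ through positive values. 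The cleanest route is: lengths are bounded by $L$, so the number of nonzero coefficients is $\le L$ and each is bounded by $L$ in absolute value, but the degree is a priori unbounded — I would use the fact that $M(g)\le 2$ forces the nonzero coefficients to occupy only boundedly many ``blocks'' or appeal to a result (e.g.\ of Dobrowolski or Boyd--Lawton style, or simply the older fact that Mahler measures of integer polynomials of bounded length have $0$ as an isolated point, which may be precisely the content of the companion Theorem in the abstract, $\mathcal M_B$) to conclude.

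Therefore the main obstacle is handling the \emph{unbounded degree} of the $g_n$: bounded length does not bound the degree, and a single-variable integer polynomial of bounded length can still have Mahler measure arbitrarily close to $1$ only via cyclotomic-like factors, but we must rule out it being close to $1$ \emph{without} being $1$. I expect the resolution to factor $g_n=c_n(z)\,h_n(z)$ with $c_n$ the cyclotomic part (measure $1$) and $h_n$ the remaining part, note $h_n$ still has bounded length, and then either $h_n$ is a unit monomial (so $m(g_n)=0$, excluded) or $M(h_n)$ is bounded below by a positive constant depending only on $L$ — the latter being the Lehmer-type statement for bounded-length polynomials, which I would either cite or prove by the compactness argument above. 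Once $m(g_n)\ge c>0$ for all large $n$, we contradict $m(F_{\r^{(n)}})\to 0$, and the theorem follows.
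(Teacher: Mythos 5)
Your route is viable and genuinely different from the paper's. You reduce to one variable via Theorem~\ref{T-1}: if $0$ were not isolated, pick nonzero $m(F_{A_n})\to 0$ and, by density of ${\mathcal M}_1(F)$, choose $\r^{(n)}$ with $|m(F_{\r^{(n)}})-m(F_{A_n})|<\min(m(F_{A_n})/2,1/n)$, so the $m(F_{\r^{(n)}})$ are strictly positive and tend to $0$. Each $F_{\r^{(n)}}$, after clearing a monomial, is an integer polynomial of length at most the length $L$ of $F$, and it is noncyclotomic because its measure is positive; Mignotte's bound \cite{Mign78} then gives $m(F_{\r^{(n)}})\ge \frac{\log 2}{2L}$, a contradiction. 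This is exactly the mechanism the paper itself notes after Theorem~\ref{T-1.75} for ${\mathcal M}_B$, and it even yields the quantitative conclusion that every positive element of ${\mathcal M}(F)$ is at least $\frac{\log 2}{2L}$. The paper's own proof is structurally different and needs no quantitative input: it reruns the three-case analysis from the proof of Theorem~\ref{T-1} (Lawton's Proposition~\ref{P-2} when $q(\r^{(n)})$ is unbounded, induction on $k$ when it is bounded) to identify the limit $0$ with some $m(F_A)$, and then invokes the Kronecker-type Proposition~\ref{P-3} to conclude that $F$ is a monomial times cyclotomics evaluated at monomials, so that every $m(F_{\r^{(n)}})$ is already $0$, contradicting their positivity.

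Two caveats about the parts you sketched rather than cited. First, the one genuinely new ingredient in your approach, the Lehmer-type lower bound for noncyclotomic integer one-variable polynomials of bounded length, must be taken as a citation (Mignotte), because your fallback compactness argument does not work: bounded length does not bound the degree (as you concede), and your proposed repair --- write $g_n=c_nh_n$ with $c_n$ the cyclotomic part and claim $h_n$ ``still has bounded length'' --- is false in general; for instance $L(z)(z-1)=z^{11}-z^9-z^8+z^3+z^2-1$ has length $6$ while Lehmer's polynomial \eqref{E-Leh} has length $9$, so stripping cyclotomic factors can increase length. The detour is unnecessary anyway: $M(g_n)>1$ already makes $g_n$ itself noncyclotomic of length at most $L$, so the bound applies directly to $g_n$. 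Second, be careful not to quote Theorem~\ref{T-1.75} (or the ${\mathcal M}_B$ statement in the abstract) for this fact: the paper deduces Theorem~\ref{T-1.75} from Theorems~\ref{T-1} and~\ref{T-1.5}, so that reference would be circular; Mignotte's independent one-variable result is the correct, non-circular citation.
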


The case where $F(\z_k)$ is a linear form was proved in 1977 by Lawton \cite{L2}.

When $F$ has integer coefficients and $\mathcal M(F)$ consists of more than just the single element $0$, Theorems  \ref{T-1} and \ref{T-1.5} tell us that this set has a smallest positive element. I call this the {\it Lehmer element of} $\mathcal M(F)$, and denote it by $\l_{\min}(F)$. It is discussed in Section \ref{S-Leh}.

 These results are consistent with a far-sighted conjecture of Boyd \cite{BoSpec} to the effect that the set $\mathcal{L}$ of Mahler measures of {\it all} polynomials with integer coefficients in any number of variables is a closed subset of $\R.$  While the theorems are some way from the full conjecture, they do, I think, represent the first substantive results in the direction of such a proof.  As Boyd pointed out, the truth of his conjecture implies easily that $0$ is an isolated point of  $\mathcal{L}.$ It is clear that Boyd's set $\mathcal{L}$
is a countable union of sets ${\mathcal M}(F)$. In fact $\mathcal{L}$ can be written
as a countable {\it nested} union of a sequence of such sets -- see Proposition \ref{P-nest}.

 For $k$-variable polynomials $F$ having integer coefficients, the arithmetic nature of $m(F)$ is an interesting one. For $k=1$, $M(F)$ is algebraic, so $m(F)$, if not $0$, is transcendental. For a few $F$ with $k\ge 2$, there are explicit formulae for $m(F)$, one example being (for $k=3$, from \cite{Sm2})
\[
m(1+z_1^{-1}+z_2+(1+z_1+z_2)z_3)=\frac{14}{3\pi^2}\zeta(3).
\]
Other deeper such formulae involve $L$-functions of various kinds, evaluated at specific integers. See the survey of Bertin and Lal\'\i n \cite{BL}, and also Papanikolas {\it et al} \cite{PRS} for a more recent example. There may even be some connection between Mahler measure of integer polynomials and the Feynman integrals of mathematical physics -- see for instance Samart \cite{Sa} and Vanhove \cite{Va}.

Theorems \ref{T-1} and \ref{T-1.5} can be used to prove another result in the direction of Boyd's conjecture.

\begin{theorem} \label{T-1.75} Let $B>0$ be given.  The set ${\mathcal M}_B$ of all Mahler measures of integer polynomials in any number of variables and having length (sum of the moduli of its coefficients) at most $B$ is closed. Furthermore, $0$ is an isolated point of the set.
\end{theorem}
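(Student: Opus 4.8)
The plan is to deduce Theorem~\ref{T-1.75} from Theorems~\ref{T-1} and~\ref{T-1.5} by reducing the infinite family of polynomials of length at most $B$ to a \emph{finite} list of ``model'' polynomials, up to operations that do not change the Mahler measure. First I would observe that an integer Laurent polynomial $f$ of length at most $B$ has at most $\fl{B}$ nonzero coefficients, each of modulus at most $B$, so there are only finitely many choices for the tuple of coefficients. The monomials carrying these coefficients live in $\Z^k$ for some $k$, but the Mahler measure is invariant under monomial substitutions $z\mapsto z^A$ for $A\in\GL_k(\Z)$ and under multiplication by a monomial (both facts are standard and implicit in the setup of $\mathcal P(F)$); hence only the \emph{affine lattice geometry} of the exponent set matters, not the ambient dimension. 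Using this I would argue that every $f$ of length $\le B$ has the same Mahler measure as some $F_A$, where $F$ ranges over a finite set $\mathcal F_B$ of Laurent polynomials (one for each possible coefficient tuple, realised with exponent vectors spanning a space of dimension at most $\fl{B}-1$) and $A$ is an integer matrix. Consequently
\[
{\mathcal M}_B=\bigcup_{F\in\mathcal F_B}\bigl({\mathcal M}(F)\cap[0,\log B]\bigr),
\]
the bound $\log B$ coming from the elementary inequality $M(f)\le\text{length}(f)\le B$.

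Given that reduction, closedness of ${\mathcal M}_B$ is immediate: it is a finite union of sets ${\mathcal M}(F)$, each closed by Theorem~\ref{T-1}, intersected with a closed interval, hence closed. For the isolated-point assertion, I would note that $0\in{\mathcal M}_B$ forces $0\in{\mathcal M}(F)$ for at least one $F\in\mathcal F_B$ (each such $F$ has integer coefficients), and Theorem~\ref{T-1.5} then gives a gap $(0,\epsilon_F)$ free of points of ${\mathcal M}(F)$; taking $\epsilon=\min_{F\in\mathcal F_B}\epsilon_F>0$ over the finite set $\mathcal F_B$ produces a punctured neighbourhood of $0$ disjoint from ${\mathcal M}_B$, so $0$ is isolated.

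The main obstacle I anticipate is the first step: making precise the claim that length-bounded polynomials in arbitrarily many variables fall into finitely many classes under measure-preserving transformations. One has to check that after discarding repeated or collinear exponent configurations, the relevant exponent vectors can always be taken inside $\Z^d$ with $d\le\fl{B}-1$ (shift one exponent to the origin, then the remaining at most $\fl{B}-1$ exponent differences span a subspace of that dimension, and an integral change of basis moves them into $\Z^d$), and that the passage from $f$ in $k$ variables to this lower-dimensional model is exactly an operation of the type $F\mapsto F_A$ allowed in the definition of $\mathcal P(F)$ — so that its Mahler measure genuinely lies in ${\mathcal M}(F)$. Once that bookkeeping is done carefully, everything else is a short deduction from the two previous theorems.
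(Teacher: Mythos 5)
Your proposal is correct and is essentially the paper's own argument: index a finite family of model polynomials by the finitely many possible coefficient tuples (signed partitions $\c=(c_1,\dots,c_t)$ of integers $b\le B$), write ${\mathcal M}_B$ as the corresponding finite union of sets ${\mathcal M}(F_\c)$, and conclude closedness from Theorem \ref{T-1} and the isolation of $0$ from Theorem \ref{T-1.5} by taking a minimum of the finitely many gaps. The one difference is that the paper chooses the models to be the linear forms $F_{\c}(\z_t)=c_1z_1+\cdots+c_tz_t$, for which the step you flag as the main obstacle is immediate: any $f=\sum_i c_i\z_k^{\j_i}$ of length at most $B$ is literally $(F_{\c})_A$ with the columns of $A$ equal to the exponent vectors $\j_i$, so no dimension reduction, integral change of basis, or monomial shift is needed (and intersecting with $[0,\log B]$ is redundant, since every $(F_{\c})_A$ again has length at most $B$ and so every element of ${\mathcal M}(F_\c)$ already lies in ${\mathcal M}_B$).
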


 Actually, the fact that $0$ is an isolated point of ${\mathcal M}_B$ already follows from
results of Mignotte \cite{Mign78} (see also  Stewart\cite{Stew78b}, \cite[Section 5.4]{Sm3}), who gave the lower bound $2^{1/(2B)}$ for the Mahler measure of integer noncyclotomic one-variable polynomials of  length at most $B$. On applying the fact, from Theorem \ref{T-1}, that ${\mathcal M}_1(F)$ is dense in ${\mathcal M}(F)$, we see that Mignotte's lower bound is valid for all positive elements of ${\mathcal M}_B$.

In another paper \cite[Theorem 2]{DS}, Dobrowolski and I have recently proved a theorem similar to Theorem \ref{T-1.75}, but for the set of Mahler measures of integer polynomials that are sums of a bounded number of monomials.

Boyd's conjecture is a substantial generalisation of a question asked by Lehmer \cite{Le} in 1933 as to whether, in the set $\{m(F)\}$ of Mahler measures of all nonzero one-variable polynomials $F$ with integer coefficients, the point $0$ is isolated. As is well known, the smallest known such $m(F)>0$ is $m(L(z))=\log(1.1762808)=0.1623576\cdots$,
where
\beq\label{E-Leh}
L(z)=z^{10}+z^9-z^7-z^6-z^5-z^4-z^3+z+1,
\enq
first discovered by Lehmer himself \cite{Le}. Lehmer's question went unstudied for many years -- see the survey \cite{Sm3} -- but has in recent decades become one of the central unsolved problems in algebraic number theory. The truth of Boyd's Conjecture would answer Lehmer's question affirmatively.

 In Section \ref{S-Q} I state a new conjecture, which implies Boyd's conjecture. Whether this conjecture turns out to be any more tractable than his will be interesting to see! In that section, I also state another conjecture which, if provable, would answer a question Boyd posed at the end of \cite{BoSpec}.

We can in fact severely restrict the matrices $A$ that are needed to produce all the different elements of $\mathcal M(F)$  in Theorem \ref{T-1}. To do this, we need to make the following definition.
We say that a matrix $H\in\Z^{\l\times k}$ is in {\it Saturated Hermite Normal Form}
if $H$ is in Hermite normal form and the intersection of the $\R$-vector space spanned by the rows of $H$ with $\Z^k$ is equal to the integer lattice spanned by the rows of $H$.  The lattice spanned by the rows of $H$ is then a socalled {\it saturated} lattice (see e.g., \cite[p. 13]{ES}).
For discussion of this form of an integer matrix, see Section \ref{S-mat} below.

 \begin{theorem} \label{T-2} We have
\[
{\mathcal M}(F)=\{m(F_H)\, : \, H\in\cup_{\l=0}^k\Z^{\l\times k}, \text{  $H$ of rank $\l$ in Saturated Hermite Normal Form} \}.
\]
\end{theorem}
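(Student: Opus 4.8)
The inclusion $\supseteq$ is immediate, since any $H$ of the stated form lies in $\cup_{\l\ge 0}\Z^{\l\times k}$, so $F_H\in\mathcal P(F)$ and (when $F_H\ne 0$) $m(F_H)\in\mathcal M(F)$. The content is the reverse inclusion: given $A\in\Z^{\l\times k}$ with $F_A\ne 0$, I plan to produce a matrix $H$ of some rank $r\le k$ in Saturated Hermite Normal Form with $m(F_H)=m(F_A)$.

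The argument will rest on two invariance properties of $m$, each a consequence of the fact that a monomial map of tori respects normalised Haar measure. First, for $U\in\GL_\l(\Z)$ the substitution $\z_\l\mapsto\z_\l^U$ is an automorphism of $\T^\l$, so, using the identity $(\z^U)^A=\z^{UA}$, one gets $m(F_{UA})=m(F_A)$. Second, if a row of $A$ vanishes then, by \eqref{E-0}, $F_A$ does not involve the corresponding variable, and integrating that variable out over $[0,1]$ in \eqref{E-1} gives $m(F_A)=m(F_{A'})$, where $A'$ is $A$ with that row removed. Applying the standard Hermite normal form reduction (left multiplication by $\GL_\l(\Z)$, which can be arranged to put all zero rows at the bottom) and then deleting the zero rows, I may replace $A$ by a full-rank matrix $B\in\Z^{r\times k}$, $r=\operatorname{rank}A\le k$, with $m(F_B)=m(F_A)$ and $F_B\ne 0$.

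The heart of the matter is the \emph{saturation} step. Let $L\subseteq\Z^k$ be the lattice generated by the rows of $B$, and let $\widetilde L=(\R L)\cap\Z^k$ be its saturation, a lattice of the same rank $r$ containing $L$ with finite index. I will choose a basis of $\widetilde L$, write it as the rows of a matrix $\widetilde H_0\in\Z^{r\times k}$, and observe that each row of $B$ is an integer combination of the rows of $\widetilde H_0$, so $B=C\widetilde H_0$ for some $C\in\Z^{r\times r}$; since the rows of $B$ and of $\widetilde H_0$ span the same $r$-dimensional real subspace, $\det C\ne 0$ (in fact $|\det C|=[\widetilde L:L]$). Consequently $F_B=(F_{\widetilde H_0})_C$, that is, $F_B(\z_r)=F_{\widetilde H_0}(\z_r^C)$, and the monomial map $\z_r\mapsto\z_r^C$ is a surjective, $|\det C|$-to-one endomorphism of $\T^r$, hence pushes normalised Haar measure to normalised Haar measure. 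Applying the resulting change-of-variables identity to the $L^1(\T^r)$ function $\log|F_{\widetilde H_0}|$ yields $m(F_B)=m(F_{\widetilde H_0})$, with $F_{\widetilde H_0}\ne 0$ because $F_B\ne 0$. Finally, I put $\widetilde H_0$ into Hermite normal form by left multiplication by a suitable $U'\in\GL_r(\Z)$; this leaves the row lattice equal to $\widetilde L$ (which is saturated) and, by the first invariance property, leaves $m$ unchanged. Thus $H:=U'\widetilde H_0$ is in Saturated Hermite Normal Form, has rank $r\le k$, and satisfies $m(F_H)=m(F_A)$, as required.

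I expect the saturation step to be the only genuinely delicate part, and the care needed there is in the bookkeeping: verifying that passing from $L$ to its saturation $\widetilde L$ corresponds exactly to a change of variables $\z_r\mapsto\z_r^C$ with $\det C\ne 0$, and that such a map is measure-preserving on $\T^r$, so that $\int_{\T^r}\log|F_{\widetilde H_0}(\z^C)|=\int_{\T^r}\log|F_{\widetilde H_0}(\z)|$ (legitimate since $\log|F_{\widetilde H_0}|$ is integrable on $\T^r$, being the integrand of a Mahler measure). The remaining ingredients — invariance of $m$ under $\GL$-operations and under deletion of zero rows, and the existence of the Hermite normal form — are routine, as is the $\supseteq$ inclusion.
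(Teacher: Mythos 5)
Your proposal is correct and takes essentially the same route as the paper: the reverse inclusion is obtained by factoring $A$ (after discarding zero rows) as a nonsingular integer matrix times a full-rank matrix in Saturated Hermite Normal Form and then invoking invariance of $m$ under nonsingular monomial substitutions, which is the paper's Proposition \ref{P-1} (quoted from \cite[Lemma 7]{Sm2}) and is exactly your Haar-measure pushforward argument. The only difference is presentational: you construct the SHNF directly from a basis of the saturated row lattice, while the paper produces the same factorization via its explicit row-operation algorithm (Propositions \ref{P-55} and \ref{P-77}).
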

 In particular, this result shows that elements $m(F_A)$ make no additional contribution to ${\mathcal M}(F)$ when $A$ has more rows than columns. Further, by Proposition \ref{P-kk} below, the only $m(F_A)$ with $A$ a square matrix that makes a contribution is the $k\times k$ identity matrix $I_k$, giving $m(F_{I_k})=m(F).$

Using this result, we can describe ${\mathcal M}(F)$ explicitly for small $k$, for instance for  $k=1,2,3$. For $k=1$ there are two types: $m(F(1))$ (corresponding to the empty matrix), and $m(F(z))$, corresponding to $I_1$. For $k=2,3$, see Tables \ref{Ta-1} and \ref{Ta-2}.

\begin{table}[htbp]
\begin{tabular}{|c|c|c|c|}
  \hline
 rank($A$)  &    Mahler measure    & matrix $A$  & range of exponents\\
  \hline
 $0$ & $m(F(1,1))$ & empty matrix & \\
$1$ & $m(F(1,z_1))$ & $(0\,1)$ & \\
$1$ & $m(F(z_1^n,z_1^p))$ & $(n\, p)$ & $n\in\N$, $p\in\Z$, $\gcd(n,p)=1$\\
$2$ & $m(F(z_1,z_2))$ & $I_2$ & \\
\hline
\end{tabular}
\caption{${\mathcal M}(F)$ for $k=2$.  }
 \label{Ta-1}
 \end{table}

\begin{table}[htbp]
\begin{tabular}{|c|c|c|c|}
  \hline
 rank($A$)  &    Mahler measure    & matrix $A$  & ranges of exponents\\
  \hline
$0$ & $m(F(1,1,1))$ & empty matrix & \\
$1$ & $m(F(1,1,z_1))$ & $(0\,0\,1)$ & \\
$1$ & $m(F(1,z_1^n,z_1^p))$ & $(0\, n\, p)$ & $n\in\N$, $p\in\Z$, $\gcd(n,p)=1$\\
$1$ & $m(F(z_1^n,z_1^p,z_1^q))$ & $(n\, p \, q)$ & $n\in\N$, $p,q\in\Z$, $\gcd(n,p,q)=1$\\
$2$ & $m(F(1,z_1,z_2))$ & $\left( \begin{array}{ccc}
0 & 1 & 0 \\
0 & 0 & 1 \end{array} \right)$ & \\
$2$ & $m(F(z_1^n,z_1^p,z_2))$ & $\left( \begin{array}{ccc}
n & p & 0 \\
0 & 0 & 1 \end{array} \right)$ & $n\in\N$, $p\in\Z$, $\gcd(n,p)=1$\\
$2$ & $m(F(z_1^n,z_1^{n'}z_2^{p'},z_1^{n''}z_2^{p''}))$ & $\left( \begin{array}{ccc}
n & n' & n'' \\
0 & p' & p'' \end{array} \right)$ & $n,p\in\N$, $n',n'',p',p''\in\Z$, $0\le n'<p'$,\\
& & &  $\gcd(n,p)=\gcd(n',p')=\gcd(p',p'')=1$\\
 & &  &  $\gcd(n,n'-rp',n''-rp'')=1$ for $0\le r<n$\\
$3$ & $m(F(z_1,z_2,z_3))$ & $I_3$ & \\
\hline
\end{tabular}
\caption{${\mathcal M}(F)$ for $k=3$.  }
 \label{Ta-2}
 \end{table}

From Theorem \ref{T-2} it is clear that the elements of the multiset $\{m(F_A)\, : \, A\in\Z^{\l\times k} \}$ given in Theorem \ref{T-1} are not all different. But, furthermore, 
I am not even claiming that all the measures given in Theorem \ref{T-2} are distinct. Indeed,
 Proposition \ref{P-3}, which follows, shows that ${\mathcal M}(F)$ can be $\{0\}$.

\section{Preliminaries}\label{S-prelims}

We now present some results needed for the proofs of Theorems \ref{T-1} and \ref{T-1.5}.

\begin{proposition}[{{Boyd \cite[Theorem 1]{Bo1} -- see also  {Schinzel \cite[Section 3.4, Cor.17, p. 260]{Sc}} and Smyth \cite[Cor. 1]{Sm1}}}]\label{P-3} Suppose that $F\in\Z[\z_k]$ for some $k\in\N$.  Then  $m(F)=0$ if and only if $F$ belongs to $\mathcal P(S)$ for some polynomial $S$ of the form $S(\z_k)=\pm z_1C_2(z_2)C_3(z_3)\cdots C_k(z_k)$ for some $k$, where $C_2,\dots,C_k$ are cyclotomic polynomials.
\end{proposition}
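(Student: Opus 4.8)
\emph{Proof sketch.} The ``if'' direction is the elementary one. Suppose $F=S_A$ with $S(\z_m)=\pm z_1C_2(z_2)\cdots C_m(z_m)$ and $A\in\Z^{k\times m}$ having columns $\ba^{(1)}\cc\ba^{(m)}$; then $F=\pm\z^{\ba^{(1)}}\prod_{j=2}^m C_j(\z^{\ba^{(j)}})$. Mahler measure is multiplicative, vanishes on monomials, and is invariant under every substitution $\z\mapsto\z^U$ with $U\in\GL_k(\Z)$ (an automorphism of the torus). Writing each $\ba^{(j)}=d_j\mathbf b_j$ with $\mathbf b_j$ primitive, we complete $\mathbf b_j$ to a row of a unimodular matrix and so transform $C_j(\z^{\ba^{(j)}})$ into the one--variable polynomial $C_j(z_1^{d_j})$; all of its roots are roots of unity, so by Jensen's formula its Mahler measure is $0$. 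Hence $m(F)=0$. (One may assume the $\mathbf b_j$ nonzero; a zero column of $A$ meeting a $C_j$ contributes only a constant factor.)

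For the ``only if'' direction I would argue by induction on $k$. For $k=1$ this is Kronecker's theorem: an integer (Laurent) polynomial $f$ with $m(f)=0$ has leading coefficient $\pm1$ and all its roots among the roots of unity, so $f=\pm z^v\prod_j\Phi_{n_j}(z)$, which is $S_A$ for $S(\z_{r+1})=\pm z_1\prod_j\Phi_{n_j}(z_{j+1})$ and $A=(v,1\cc1)$. For $k\ge2$, assume $m(F)=0$ and (after removing a monomial factor, which does not affect $m$) that $F$ genuinely involves $z_k$; write $F=\sum_{i=0}^dP_i(\z_{k-1})z_k^i$ with $P_0,P_d\neq0$. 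Integrating first over $t_k$ and applying Jensen's formula gives
\[
m(F)=m(P_d)+\int_{[0,1]^{k-1}}\sum_{i=1}^d\log^+\bigl|\beta_i(e^{2\pi it_1}\cc e^{2\pi it_{k-1}})\bigr|\,dt_1\cdots dt_{k-1},
\]
where $\beta_1\cc\beta_d$ are the zeros of $F$ in $z_k$. Since $F$ has integer coefficients, $m(P_d)\ge0$ by Proposition \ref{P-lbd} and the integrand is nonnegative, so $m(P_d)=0$ and, for almost every $\w\in[0,1]^{k-1}$, every zero $\beta_i(\w)$ has modulus $\le1$. Applying the same reasoning to the polynomial obtained from $F$ by $z_k\mapsto1/z_k$ (whose Mahler measure is again $0$) shows $m(P_0)=0$ as well and that, for almost every $\w$, every $\beta_i(\w)$ has modulus $\ge1$; hence $|\beta_i(\w)|=1$ for all $i$ and almost all $\w$. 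By the inductive hypothesis, $P_0$ and $P_d$ are, up to sign and a monomial, products of cyclotomic polynomials evaluated at monomials.

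It remains to pass from ``$F(\w,\cdot)$ has all its zeros on $|z_k|=1$ for almost every $\w$ on the torus'' to the asserted factorisation of $F$. The plan is: this unit--modulus condition, together with integrality, forces a functional equation $z_k^{\,d}F(z_1^{-1}\cc z_{k-1}^{-1},z_k^{-1})=\pm\z^{\mathbf v}F(\z_k)$ for some monomial $\z^{\mathbf v}$ (it holds for $\w$ on a Zariski--dense subset, hence identically), i.e.\ $F$ is essentially self--reciprocal in $z_k$; one then exhibits an explicit factor $\Phi_n(\z^{\mathbf b})$ of $F$ with $\mathbf b$ primitive --- for instance by performing a unimodular change of variables that makes an appropriate edge of the Newton polytope of $F$ horizontal and inspecting the resulting one--parameter family of monic integer polynomials with unimodular roots --- divides it out, and inducts on total degree, using multiplicativity of $m$ and the fact that $m(\Phi_n(\z^{\mathbf b}))=0$. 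Collecting the resulting factors $\pm\z^{\mathbf v}\prod_j\Phi_{n_j}(\z^{\mathbf b_j})$ into an $S_A$ as in the base case then finishes the proof. The main obstacle is precisely this last ``relative Kronecker'' step: extracting a \emph{genuine} cyclotomic--in--a--monomial factor from the a.e.\ unimodularity of the $z_k$--roots. The difficulty is that the $\beta_i(\w)$ are algebraic functions, possibly with nontrivial monodromy, so controlling how the unimodular roots organise themselves into factors $\Phi_n(\z^{\mathbf b})$ requires careful book-keeping with the Newton polytope and with the leading and trailing coefficients in $z_k$; everything else is routine.
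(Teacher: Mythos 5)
You should note first that the paper does not prove Proposition \ref{P-3} at all: it is quoted as a known theorem, with the proof delegated to Boyd, Schinzel and Smyth. So the question is whether your sketch would stand on its own, and it does not. Your ``if'' direction is essentially right (additivity of $m$ on products, invariance under the substitutions of Proposition \ref{P-1}, and $m(C(z^{d}))=0$ by Jensen), except for the parenthetical about zero columns: if the column of $A$ feeding a factor $C_j=\Phi_{p^a}$ is zero, that factor becomes the constant $\Phi_{p^a}(1)=p$, whose logarithmic measure is $\log p>0$, so ``contributes only a constant factor'' is not harmless; one has to read the statement so that degenerate substitutions of this kind are excluded (or produce the zero polynomial). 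This is a small point, but it shows the reduction needs to be stated with care.

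The serious problem is the ``only if'' direction, where you yourself identify the crux and then leave it unproved. The fibered Jensen argument correctly gives $m(P_0)=m(P_d)=0$ and that for almost every $\w$ on the torus all roots $\beta_i(\w)$ are unimodular, but the passage from this to a factor $\Phi_n(\z^{\mathbf b})$ of $F$ is exactly the content of the theorem being proved, and your plan for it is not routine. Even the intermediate functional equation is not what you state: from a.e.\ unimodularity one only gets $P_d(\z)\,z_k^{\,d}F(\z^{-1},z_k^{-1})=P_0(\z^{-1})\,F(\z,z_k)$ identically, and showing the factor $P_0(\z^{-1})/P_d(\z)$ is $\pm$ a monomial (rather than, say, a ratio of two different cyclotomic products, which the inductive hypothesis on $P_0,P_d$ does not rule out) already requires a real argument. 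Likewise, the $\beta_i(\w)$ are branches of algebraic functions with monodromy, so ``inspecting a one-parameter family of monic integer polynomials with unimodular roots'' does not by itself produce an integer Laurent-polynomial factor of the specific shape $\Phi_n(\z^{\mathbf b})$; Kronecker's theorem applies fiberwise only when the fiber polynomial has integer coefficients, which it does not for generic $\w$. So the attempt reduces Proposition \ref{P-3} to an unproved ``relative Kronecker'' claim, which is precisely what the cited proofs supply (Boyd by reducing to the one-variable Kronecker theorem, Smyth by analysing the case of equality in the extreme-coefficient bound of Proposition \ref{P-lbd}); citing one of these, or reproducing such an argument in full, is what would be needed to close the gap.
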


\begin{lemma}  \label{L-minus1} If  $B\in\Z^{\l'\times\l}$ and $A\in\Z^{\l\times k}$ then
$\left(\z_\l^{B}\right)^A=\z_\l^{(BA)}$. Further, if $G=F_A\in\mathcal P(F)$ for some polynomial $F$, then $G_B=F_{BA}.$
\end{lemma}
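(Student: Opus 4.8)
The plan is a direct coordinate computation, since the lemma is essentially a bookkeeping identity. For the first claim I would fix an index $m$ with $1\le m\le k$ and compare the $m$-th coordinates of the two $k$-tuples $(\z_{\l'}^{B})^A$ and $\z_{\l'}^{BA}$, where $\z_{\l'}=(z_1,\dots,z_{\l'})$. Using \eqref{E-0}, the $j$-th coordinate of the $\l$-tuple $\z_{\l'}^{B}$ is the monomial $\prod_{i=1}^{\l'}z_i^{b_{ij}}$; substituting these $\l$ monomials into the matrix $A$ via \eqref{E-0} once more, the $m$-th coordinate of $(\z_{\l'}^{B})^A$ becomes
\[
\prod_{j=1}^{\l}\Bigl(\prod_{i=1}^{\l'}z_i^{b_{ij}}\Bigr)^{a_{jm}}
=\prod_{i=1}^{\l'}z_i^{\sum_{j=1}^{\l}b_{ij}a_{jm}}
=\prod_{i=1}^{\l'}z_i^{(BA)_{im}},
\]
which is exactly the $m$-th coordinate of $\z_{\l'}^{BA}$. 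The only manipulation involved is that exponents add under multiplication of monomials and multiply when a monomial is raised to an integer power, together with interchanging and collecting the resulting double product; this is legitimate in the Laurent setting since all the exponents are (possibly negative) integers.

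For the second assertion I would simply unwind the definitions. By definition $G_B(\z_{\l'})=G(\z_{\l'}^{B})$, and since $G=F_A$ we have $G(\w)=F(\w^A)$ for any $\l$-tuple $\w$ of Laurent monomials. Taking $\w=\z_{\l'}^{B}$ and applying the first part gives $G_B(\z_{\l'})=F\bigl((\z_{\l'}^{B})^A\bigr)=F(\z_{\l'}^{BA})=F_{BA}(\z_{\l'})$, as claimed.

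I do not expect any genuine obstacle here: the statement records the fact that the assignment $A\mapsto(\,\cdot\,)_A$ is (contravariantly) functorial, i.e.\ that composing two monomial substitutions corresponds to multiplying the underlying integer matrices. One could dress this up using the identification of $\Z^k$ with the character lattice of the torus $\G_m^k$, under which $\z\mapsto\z^A$ is the morphism dual to $A$, but the elementary computation above is cleaner. The only point requiring mild care is keeping the row/column conventions of \eqref{E-0} consistent throughout — so that it is the product $BA$, and not $AB$, that appears — and checking that every substitution remains well defined for Laurent polynomials rather than ordinary ones.
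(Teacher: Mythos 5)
Your proposal is correct, and the second assertion is handled exactly as in the paper (the one-line unwinding $(F_A)_B(\z_{\l'})=F_A(\z_{\l'}^B)=F((\z_{\l'}^B)^A)=F(\z_{\l'}^{BA})$). The only difference is that for the first identity the paper simply cites Schinzel \cite[Section 3.4, Cor.~1, p.~223]{Sc}, whereas you verify it directly by the coordinate computation with exponents $\sum_j b_{ij}a_{jm}=(BA)_{im}$; this is a correct, self-contained substitute for the citation, with the row/column conventions of \eqref{E-0} handled consistently.
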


\begin{proof} The first result comes from \cite[Section 3.4, Cor. 1, p. 223]{Sc}.
For the second result, we have
\[
G_B(\z_{\l'})=(F_A)_B(\z_{\l'})=F_A(\z_{\l'}^B)=F((\z_{\l'}^B)^A)=F(\z_{\l'}^{BA})=F_{BA}(\z_{\l'}),
\]
as claimed.
\end{proof}

\begin{proposition}\label{P-1} For a polynomial $G(\z_\l)$ and nonsingular $V\in\Z^{\l\times \l}$  we have $m(G(\z_\l^V))=m(G(\z_\l))$ and $m(G_V)=m(G)$. Further, for a polynomial $F(\z_k)$ and any $\l\times k$ integer matrix $A$ we have $m(F_{VA})=m(F_A).$ 
\end{proposition}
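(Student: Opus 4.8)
The plan is to reduce everything to the single fact that an invertible integer change of variables preserves Mahler measure, and then bootstrap from there using Lemma \ref{L-minus1}. The first claim, $m(G(\z_\l^V)) = m(G(\z_\l))$ for nonsingular $V \in \Z^{\l\times\l}$, is the substantive point. I would handle it by a change of variables in the integral \eqref{E-1}. Writing each $z_j = e^{2\pi i t_j}$, the substitution $\z_\l \mapsto \z_\l^V$ amounts to the linear map $\bt \mapsto V^{\mathrm T}\bt$ on the torus $(\R/\Z)^\l$; since $V$ has integer entries this is a well-defined endomorphism of the torus, and since $\det V \neq 0$ it is surjective, covering the torus exactly $|\det V|$ times. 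Because Haar measure on the torus is invariant (and the $|\det V|$-fold covering rescales total mass back to $1$), the integral $\int_{(\R/\Z)^\l} \log|G(e^{2\pi i (V^{\mathrm T}\bt)_1},\dots)|\,d\bt$ equals $\int_{(\R/\Z)^\l}\log|G(e^{2\pi i t_1},\dots,e^{2\pi i t_\l})|\,d\bt$, i.e. $m(G(\z_\l^V)) = m(G)$. One should remark that $G(\z_\l^V)$ is genuinely nonzero so that the Mahler measure is defined; this follows because $V$ nonsingular means the monomial map $\z_\l \mapsto \z_\l^V$ is dominant, so no nonzero Laurent polynomial pulls back to $0$.

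Given the first identity, the second, $m(G_V) = m(G)$, is immediate: by the definition of $G_V$ we have $G_V(\z_\l) = G(\z_\l^V)$, so this is literally the same statement. For the third claim, let $G = F_A$ for the given $\l\times k$ matrix $A$, and apply Lemma \ref{L-minus1} with $B = V$: it gives $G_V = (F_A)_V = F_{VA}$. Therefore $m(F_{VA}) = m(G_V) = m(G) = m(F_A)$, which is exactly what is wanted.

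The only real obstacle is making the change-of-variables argument for the first identity fully rigorous, and there are two points of care. First, $\log|G|$ is not bounded — it has logarithmic singularities along the zero set of $G$ on the torus — so one must know the integral converges absolutely (this is the standard fact that $m(G)$ is finite for nonzero $G$, which underlies the definition \eqref{E-1} itself) and that the change of variables is valid for such an $L^1$ integrand; this is routine since the torus endomorphism is measure-preserving up to the constant factor $|\det V|$. Second, one should verify the torus map really is $|\det V|$-to-one and measure-multiplying by exactly that factor, which is the elementary statement that $V^{\mathrm T}(\R/\Z)^\l$ — more precisely the image lattice index — gives $[\Z^\l : V^{\mathrm T}\Z^\l] = |\det V|$. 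I expect this first identity is, in fact, already standard in the literature (it is the several-variable analogue of the one-variable fact $m(f(z^n)) = m(f(z))$), so an alternative is simply to cite it; but the torus change-of-variables proof is short and self-contained and is the route I would take.
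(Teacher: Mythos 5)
Your proposal is correct; the only point where it genuinely diverges from the paper is the first identity $m(G(\z_\l^V))=m(G(\z_\l))$, which the paper does not prove but simply cites (Smyth \cite[Lemma 7]{Sm2}, with Schinzel \cite{Sc} for the unimodular case $\det V=\pm 1$), whereas you prove it directly: the substitution corresponds to the map $\bt\mapsto V^{\mathrm T}\bt$ on $(\R/\Z)^\l$, which is a surjective continuous endomorphism of the torus since $\det V\ne 0$, hence pushes Haar measure to a translation-invariant probability measure, i.e.\ to Haar measure itself (equivalently, your $|\det V|$-to-one covering count with Jacobian $|\det V|$), so composition with it leaves the integral of the $L^1$ function $\log|G|$ unchanged. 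That argument is sound, and you correctly flag the only delicate points: $G(\z_\l^V)$ is nonzero because a nonsingular $V$ sends distinct exponent vectors to distinct exponent vectors (no cancellation of monomials), and $\log|G|\in L^1(\T^\l)$ so the change of variables is legitimate. The remaining two claims you handle exactly as the paper does: $m(G_V)=m(G)$ is immediate from the definition of $G_V$, and $m(F_{VA})=m(F_A)$ follows by setting $G=F_A$ and using $(F_A)_V=F_{VA}$ from Lemma \ref{L-minus1} — the paper writes this as a chain of equalities $m(F_{VA})=m(F(\z_\l^{VA}))=m(F_A(\z_\l^V))=m(F_A)$ rather than invoking the lemma by name, but it is the same computation. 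What your route buys is a short self-contained proof of the key invariance in place of an external citation; what the paper's route buys is brevity.
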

\begin{proof} For $m(G(\z_\l^V))=m(G(\z_\l))$, see \cite[Lemma 7]{Sm2}. (See also Schinzel \cite[Section 3.4, Cor. 8, p. 226]{Sc} for the case $\det(V)=\pm 1$.) Then $m(G_V)=m(G)$ follows straight from the definition of $G_V.$
Next, we have
\[
m(F_{VA}(\z_\l))=m(F(\z_\l^{VA}))=m(F((\z_\l)^V)^A)=m(F_A(\z_\l^V))=m(F_A(\z_\l)).
\]

\end{proof}

\begin{lemma}  \label{L-00}  If $G\in\mathcal P(F)$ then $\mathcal P(G)\subseteq\mathcal P(F)$ and $\mathcal M(G)\subseteq\mathcal M(F)$.
\end{lemma}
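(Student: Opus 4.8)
The plan is to deduce both inclusions directly from the composition formula for monomial substitutions given by the second part of Lemma~\ref{L-minus1}. The idea is that applying the substitution coming from a matrix $B$ to a polynomial which is itself obtained from $F$ by the substitution coming from a matrix $A$ yields exactly the polynomial obtained from $F$ by the single substitution coming from the product $BA$. Hence the whole family $\mathcal P(G)$ sits inside $\mathcal P(F)$, and the Mahler-measure statement then falls out of the definition of $\mathcal M(\cdot)$.

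Carrying this out, I would first fix a representation $G=F_A$ with $A\in\Z^{\l\times k}$ and $\l\ge 0$, witnessing the hypothesis $G\in\mathcal P(F)$. An arbitrary element of $\mathcal P(G)$ is of the form $G_B$ with $B\in\Z^{\l'\times\l}$ and $\l'\ge 0$. By Lemma~\ref{L-minus1} we have $G_B=F_{BA}$, and since $BA\in\Z^{\l'\times k}$ this exhibits $G_B$ as an element of $\mathcal P(F)$; thus $\mathcal P(G)\subseteq\mathcal P(F)$. For the second assertion, any nonzero element of $\mathcal P(G)$ is, by what has just been shown, a nonzero element of $\mathcal P(F)$, so its Mahler measure belongs to $\mathcal M(F)$; comparing with the definition $\mathcal M(G)=\{m(G_B)\,:\,G_B\in\mathcal P(G),\ G_B\ne 0\}$ then gives $\mathcal M(G)\subseteq\mathcal M(F)$.

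I do not anticipate any genuine obstacle: this lemma is essentially a repackaging of Lemma~\ref{L-minus1}. The only points needing a moment's attention are bookkeeping trivialities — that the product $BA$ of an $\l'\times\l$ matrix with an $\l\times k$ matrix has shape $\l'\times k$, so that it really does define a member of $\mathcal P(F)$, and that the degenerate cases $\l=0$ or $\l'=0$ cause no trouble — but these are precisely what the convention for $\z_\l^A$ in \eqref{E-0} and the statement of Lemma~\ref{L-minus1} already take care of.
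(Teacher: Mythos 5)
Your argument is correct and is essentially the paper's own proof: fix a representation $G=F_A$, apply the composition formula $G_B=F_{BA}$ from Lemma~\ref{L-minus1} to get $\mathcal P(G)\subseteq\mathcal P(F)$, and then the inclusion of Mahler measure sets follows immediately from the definitions. No gaps; nothing further is needed.
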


\begin{proof} Suppose that $F=F(\z_k)$ and $G=G(\z_\l)\in\mathcal P(F)$. Then $G=F_A$ for some $A\in\Z^{\l\times k}$, 
and for  any  $B\in\Z^{\l'\times \l}$ with $0\le\l'\le\l$ we have $G_B=F_{BA}$
by Lemma \ref{L-minus1}. Note that
$BA\in\Z^{\l'\times k}$ with $0\le\l'\le\l\le k$. This proves the first assertion, from which the second assertion follows immediately.
\end{proof}

\begin{lemma}  \label{L-000} For any two multivariable Laurent polynomials $F$ and $G$ we have that $\mathcal P(FG)\subseteq\mathcal P(F)\mathcal P(G)$ and $\mathcal M(FG)\subseteq\mathcal M(F)+\mathcal M(G)$.
\end{lemma}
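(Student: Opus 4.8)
The plan is to exploit two facts: that substituting a fixed tuple of Laurent monomials into a Laurent polynomial is a ring homomorphism, and that the logarithmic Mahler measure is additive over products. Throughout I would regard $F$ and $G$ as living in the same Laurent polynomial ring $\C[z_1^{\pm1},\dots,z_k^{\pm1}]$, so that $FG$ is again a $k$-variable Laurent polynomial.

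First I would prove $\mathcal P(FG)\subseteq\mathcal P(F)\mathcal P(G)$. Fix $\l\ge 0$ and $A\in\Z^{\l\times k}$. Since evaluation at the $k$-tuple $\z_\l^A$ of Laurent monomials is a ring homomorphism $\C[z_1^{\pm1},\dots,z_k^{\pm1}]\to\C[z_1^{\pm1},\dots,z_\l^{\pm1}]$, one gets
\[
(FG)_A(\z_\l)=(FG)(\z_\l^A)=F(\z_\l^A)\,G(\z_\l^A)=F_A(\z_\l)\,G_A(\z_\l).
\]
As $F_A\in\mathcal P(F)$ and $G_A\in\mathcal P(G)$, every element $(FG)_A$ of $\mathcal P(FG)$ lies in $\mathcal P(F)\mathcal P(G)$.

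Next I would deduce the statement about Mahler measures. Take any nonzero $(FG)_A=F_A G_A\in\mathcal P(FG)$, so $m((FG)_A)$ is a typical element of $\mathcal M(FG)$. Because $\C[z_1^{\pm1},\dots,z_\l^{\pm1}]$ is an integral domain, both $F_A$ and $G_A$ are nonzero, so $m(F_A)\in\mathcal M(F)$ and $m(G_A)\in\mathcal M(G)$ are defined. Applying the integral formula \eqref{E-1} to $F_A G_A$ and using $\log|F_A G_A|=\log|F_A|+\log|G_A|$ pointwise on the torus gives $m((FG)_A)=m(F_A)+m(G_A)$, which lies in $\mathcal M(F)+\mathcal M(G)$. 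Hence $\mathcal M(FG)\subseteq\mathcal M(F)+\mathcal M(G)$.

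This argument is essentially forced, and I do not anticipate any real obstacle. The only point that needs a word of care is that the factorisation $(FG)_A=F_A G_A$ together with $(FG)_A\ne 0$ forces both $F_A$ and $G_A$ to be nonzero, so that $m(F_A)$ and $m(G_A)$ are genuinely members of $\mathcal M(F)$ and $\mathcal M(G)$ — and this is exactly what the integral-domain property of the Laurent polynomial ring supplies.
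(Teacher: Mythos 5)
Your proposal is correct and follows essentially the same route as the paper: both rest on the identity $(FG)_A=F_AG_A$ (substitution being a ring homomorphism) together with the additivity $m(F_AG_A)=m(F_A)+m(G_A)$. Your extra observation that $(FG)_A\ne 0$ forces $F_A,G_A\ne 0$ via the integral-domain property is a small point the paper leaves implicit, and it is a welcome bit of care rather than a divergence.
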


\begin{proof} For $F,G$ polynomials in $\z_k$ and $A\in\Z^{\l\times k}$ and some $\l$ with $0\le\l\le k$ we have
\[
(FG)_A=F_AG_A\in\mathcal P(F)\mathcal P(G),
\]
and hence $m((FG)_A)=m(F_A)+m(G_A)\in\mathcal M(F)+\mathcal M(G)$.
\end{proof}

This immediately implies the following.

\begin{cor} \label{C-cyccc} If $\mathcal M(G)=\{0\}$ (see Proposition \ref{P-3}), then $\mathcal M(FG)=\mathcal M(F)$.
\end{cor}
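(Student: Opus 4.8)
The proof splits into the two inclusions $\mathcal M(FG)\subseteq\mathcal M(F)$ and $\mathcal M(F)\subseteq\mathcal M(FG)$, and I would obtain both by reading off Lemma \ref{L-000}. The first is immediate: by Lemma \ref{L-000} we have $\mathcal M(FG)\subseteq\mathcal M(F)+\mathcal M(G)$, and since $\mathcal M(G)=\{0\}$ the Minkowski sum on the right collapses to $\mathcal M(F)+\{0\}=\mathcal M(F)$.

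For the reverse inclusion I would argue element by element. Let $m(F_A)\in\mathcal M(F)$, so $A\in\Z^{\l\times k}$ for some $\l\ge 0$ and $F_A\ne 0$. The ring of Laurent polynomials is an integral domain, so $(FG)_A=F_AG_A$; assuming for the moment that $G_A\ne 0$, this product is nonzero, hence lies in $\mathcal P(FG)$ and contributes to $\mathcal M(FG)$, with
\[
m\big((FG)_A\big)=m(F_A)+m(G_A).
\]
Since $G_A\in\mathcal P(G)$ and $G_A\ne 0$, we have $m(G_A)\in\mathcal M(G)=\{0\}$, i.e.\ $m(G_A)=0$; therefore $m(F_A)=m\big((FG)_A\big)\in\mathcal M(FG)$. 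Combined with the first inclusion this gives $\mathcal M(FG)=\mathcal M(F)$.

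The step that genuinely needs justification — and the one I expect to be the real obstacle — is the claim that $G_A\ne 0$ for every $A$ with $F_A\ne 0$. Since the monomial map of \eqref{E-0} sends $(1,\dots,1)$ to $(1,\dots,1)$, one has $G_A(1,\dots,1)=G(1,\dots,1)$, so $G_A\ne 0$ as soon as $G$ does not vanish at the all-ones point; for the polynomials $G$ at issue here — which by Proposition \ref{P-3} are, up to passing to $\mathcal P(\cdot)$, a monomial times a product of cyclotomic polynomials — I would verify this directly from that structure. More conceptually, $G_A=0$ precisely when $G$ vanishes identically on the subtorus parametrised by $A$, and the task is to rule this out, or else to replace such an $A$ by another matrix realising the same value $m(F_A)$ with $G$ not vanishing on the corresponding subtorus; the structural description of $G$ from Proposition \ref{P-3} is exactly what I would lean on to carry this through.
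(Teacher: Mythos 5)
Your element-wise computation is exactly the argument the paper has in mind: Corollary \ref{C-cyccc} is presented as an immediate consequence of Lemma \ref{L-000}, the reverse inclusion being implicitly $m((FG)_A)=m(F_A)+m(G_A)=m(F_A)$, so in approach you match the paper. The difficulty is the step you yourself isolate, namely that $G_A\ne 0$ whenever $F_A\ne 0$, and here your proposal does not succeed: the sufficient condition you offer, $G(1,\dots,1)\ne 0$, fails for typical $G$ with $\mathcal M(G)=\{0\}$. By Proposition \ref{P-3} such a $G$ (in the integer case) is, up to sign, a monomial times cyclotomic polynomials evaluated at monomials, and as soon as one of those cyclotomic factors is $z-1$, i.e.\ $G$ has a factor $\z_k^{\mathbf u}-1$, one has $G(1,\dots,1)=0$. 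This is not a marginal case: the instance the paper actually uses in Proposition \ref{P-nest} is $G=z_1-1$, which vanishes at the all-ones point, and for it $G_A=0$ precisely when the first column of $A$ is zero, while $F_A$ may perfectly well be nonzero for such $A$. In general (integer case) $G_A=0$ occurs exactly when $G$ has a factor $\z_k^{\mathbf u}-1$ with $A\mathbf u=\0$.

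So the genuine gap is the case $F_A\ne 0$, $G_A=0$: you must still produce some $B$ with $(FG)_B\ne 0$ and $m((FG)_B)=m(F_A)$, or show that such values can never lie outside $\mathcal M(FG)$. Your second, ``more conceptual'' suggestion names the right target but is only a plan; note moreover that the paper's only general tools for preserving the exact value $m(F_A)$ --- replacing $A$ by $VA$ with $V$ nonsingular (Proposition \ref{P-1}) or by its SHNF (Proposition \ref{P-77}) --- cannot close it, since $A\mathbf u=\0$ forces $(VA)\mathbf u=\0$, so $G$ still vanishes identically on the corresponding subtorus; any repair needs a genuinely different matrix or a limiting argument via Theorem \ref{T-1}, combined with the structure of $G$ from Proposition \ref{P-3}. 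To be fair, the paper itself offers no argument at this point --- it derives the corollary from Lemma \ref{L-000} in one line --- so you have reproduced the paper's reasoning while honestly exposing the step it glosses over; but as written your proposal establishes only $\mathcal M(FG)\subseteq\mathcal M(F)$, not the stated equality. (For the paper's application in Proposition \ref{P-nest}, only the single identity $m((z_1-1)F)=m(F)$ is actually needed, and that is immediate.)
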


Next, given $\l\ge 2$ and $\r=(r_1,\dots,r_\l)\in\Z^\l$, define, following Boyd \cite{Bo1,BoSpec}
\[
q(\r)=\min_{\0\ne\s\in\Z^\l}\{\max_{i=1}^\l|s_i|\, : \, \r\cdot\s=0\}.
\]
Here $\s=(s_1,\dots,s_\l)$. The function $q$ measures, in some sense, how different in magnitude the $r_i$ are.

\begin{lemma}[{{Boyd \cite[p. 118]{Bo1}}}]\label{L-1} Let $n\in\N$ and $\r_n=(1,n,n^2,\dots,n^{\ell-1})$. Then $q(\r_n)=n$ (and so goes to $\infty$ as $n\to\infty)$.
\end{lemma}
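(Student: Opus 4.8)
The plan is to show both inequalities $q(\r_n)\le n$ and $q(\r_n)\ge n$ directly from the definition of $q$. For the upper bound, I would exhibit a single explicit nonzero integer vector $\s$ orthogonal to $\r_n=(1,n,n^2,\dots,n^{\l-1})$ whose coordinates are all bounded by $n$ in absolute value: the obvious candidate is $\s=(n,-1,0,\dots,0)$ (or more symmetrically any $\s$ with $s_i=n$, $s_{i+1}=-1$, and all other entries $0$), since $\r_n\cdot\s = n\cdot n^{i-1} + (-1)\cdot n^{i} = 0$. Its sup-norm is $n$, so $q(\r_n)\le n$.

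The substantive direction is $q(\r_n)\ge n$, i.e.\ every nonzero $\s=(s_1,\dots,s_\l)\in\Z^\l$ with $\sum_{i=1}^\l s_i n^{i-1}=0$ must have $\max_i|s_i|\ge n$. I would argue by contradiction: suppose $|s_i|\le n-1$ for all $i$, and consider the polynomial $P(x)=\sum_{i=1}^\l s_i x^{i-1}\in\Z[x]$, which is nonzero (as $\s\ne\0$) and satisfies $P(n)=0$. The key step is to bound $|P(n)|$ from below. Writing $j$ for the largest index with $s_j\ne 0$, the ``leading term'' $|s_j| n^{j-1}\ge n^{j-1}$ dominates the tail: the remaining terms contribute at most $(n-1)(1+n+\dots+n^{j-2}) = (n-1)\cdot\frac{n^{j-1}-1}{n-1} = n^{j-1}-1 < n^{j-1}$. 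Hence $|P(n)|\ge n^{j-1} - (n^{j-1}-1) = 1 > 0$, contradicting $P(n)=0$. This is essentially the standard base-$n$ representation argument, and it is the only place where any real estimation is needed; it is elementary but is the crux of the lemma.

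Finally, combining the two bounds gives $q(\r_n)=n$, and since $n\to\infty$ forces $q(\r_n)\to\infty$, the parenthetical remark follows immediately. I expect the only mild subtlety to be bookkeeping with the index shift (the coordinates are $s_1,\dots,s_\l$ but the exponents are $0,\dots,\l-1$) and making sure the geometric-series bound is stated with the correct strict inequality so that the final ``$\ge 1$'' is genuinely positive; neither of these is a real obstacle.
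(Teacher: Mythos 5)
Your proof is correct: the explicit vector $(n,-1,0,\dots,0)$ gives $q(\r_n)\le n$, and the base-$n$ estimate (leading term $\ge n^{j-1}$ versus tail $\le n^{j-1}-1$) correctly rules out any nonzero orthogonal $\s$ with all $|s_i|\le n-1$, so $q(\r_n)=n$. The paper itself states this lemma without proof, citing Boyd, and your argument is essentially the standard one Boyd uses, so there is nothing to add.
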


The next result was first conjectured by Boyd \cite{Bo1}, who also proved in \cite{BoSpec} some partial results in direction of his conjecture, including essentially the result for $k=2$.

\begin{proposition}[{{Lawton\cite{L}}}]\label{P-2} Let $F(z_1,\dots,z_k)$ be a Laurent polynomial with complex coefficients, and suppose that $\r^{(1)}, \r^{(2)},\dots,\r^{(n)},\dots$ is a sequence of vectors in $\Z^k$ with $q(\r^{(n)})\to\infty$ as $n\to\infty$. Then
\[
\lim_{n\to\infty}m\left(F_{\r^{(n)}}\right)=m(F).
\]
\end{proposition}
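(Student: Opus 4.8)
The plan is to reduce the multivariable limit to a one-variable estimate and then to quantify, uniformly in $n$, how close $m(F_{\r^{(n)}})$ is to $m(F)$. First I would pass to the substitution $z_j \mapsto z^{r_j}$ and write, by the one-variable Jensen formula, $m(F_{\r^{(n)}}) = \int_0^1 \log|F(e^{2\pi i r_1 t},\dots,e^{2\pi i r_k t})|\,dt$, where $\r=\r^{(n)}$. Setting this against $m(F) = \int_{[0,1]^k}\log|F(e^{2\pi i t_1},\dots,e^{2\pi i t_k})|\,dt$, I want to show the first integral is a Riemann-type sample of the second along the one-dimensional subtorus $\{(r_1 t,\dots,r_k t) \bmod 1\}$, and that this subtorus equidistributes in $[0,1]^k$ as $q(\r)\to\infty$ in a way strong enough to control the (only mildly singular) integrand $\log|F|$.

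The key steps, in order, would be: (1) Reduce to the case where $F$ has no cyclotomic or monomial factors that force part of $m(F)$ to be trivial — more concretely, factor out zeros of $F$ on the torus and handle the logarithmic singularities of $\log|F|$ separately, since $\log|F| \in L^p$ for all $p<\infty$ on $[0,1]^k$ and likewise its pullback to the circle is integrable. (2) Write $\log|F| = \log|c| + \sum \log|1 - (\text{monomial})|$-type pieces using the factorization of $F$ over $\overline{\C(z_1,\dots,z_{k-1})}$, or alternatively expand $\log|F(e^{2\pi i \bt})|$ in a Fourier series $\sum_{\bm \in \Z^k} c_{\bm} e^{2\pi i \bm\cdot\bt}$ and note $\int_0^1 \log|F(e^{2\pi i r_1 t},\dots)|\,dt = \sum_{\bm:\ \bm\cdot\r = 0} c_{\bm}$, while $m(F) = c_{\bo}$. (3) The difference $m(F_{\r^{(n)}}) - m(F) = \sum_{\bm \ne \bo,\ \bm\cdot\r^{(n)}=0} c_{\bm}$; since any such $\bm$ satisfies $\|\bm\|_\infty \ge q(\r^{(n)})$ by the definition of $q$, this sum is bounded by $\sum_{\|\bm\|_\infty \ge q(\r^{(n)})} |c_{\bm}|$, which tends to $0$ as $q(\r^{(n)})\to\infty$ provided the Fourier coefficients are absolutely summable. (4) Establish the needed decay/summability of $c_{\bm}$: this is where the argument is genuinely delicate, because $\log|F|$ is not smooth — it has logarithmic singularities along the zero set of $F$ on the torus — so $\sum|c_{\bm}|$ need not converge outright, and one must instead smooth $\log|F|$ (e.g.\ replace $F$ by $F + \varepsilon$, or convolve, or truncate the Fourier series and bound the tail using $L^2$ plus the geometry of the vanishing locus), prove the estimate for the smoothed version uniformly in $\r^{(n)}$, and then let the smoothing parameter tend to $0$.

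The main obstacle is exactly step (4): controlling the contribution of the logarithmic singularities of $\log|F|$ on the unit torus, uniformly over the sequence $\r^{(n)}$. A clean way around it is to use the structure of $F$: by choosing the variable $z_k$ generically one may write $F(z_1,\dots,z_k) = c(z_1,\dots,z_{k-1}) \prod_j (z_k - \alpha_j(z_1,\dots,z_{k-1}))$ with the $\alpha_j$ algebraic over $\C(z_1,\dots,z_{k-1})$, so that by Jensen in the last variable $m(F) = m(c) + \sum_j m^+(\alpha_j)$, reducing the dimension by one and setting up an induction on $k$; the base case $k=1$ is trivial (the substitution $z\mapsto z^{r}$ with $r\ne 0$ fixes $m$ exactly), and the inductive step requires tracking how $q(\r^{(n)})\to\infty$ forces the relevant truncated vectors to have large $q$ as well. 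Either route — the Fourier-analytic one with careful smoothing, or the inductive one via Jensen in one variable at a time — works; I would present the inductive version, since it keeps the singularities of $\log|\cdot|$ one-variable at each stage and makes the uniformity in $\r^{(n)}$ transparent.
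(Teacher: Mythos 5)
There is no proof of this statement in the paper to compare against: Proposition \ref{P-2} is quoted as Lawton's theorem \cite{L} (with the earlier special case, $F$ nonvanishing on $\T^k$, due to Boyd \cite{Bo1}). Judged on its own terms, your proposal is an outline whose essential analytic content is missing. In the Fourier route, both the identity $\int_0^1\log|F_{\r}(e^{2\pi i t})|\,dt=\sum_{\mathbf{m}\cdot\r=0}c_{\mathbf{m}}$ and the tail bound $\bigl|\sum_{\mathbf{m}\ne\mathbf{0},\,\mathbf{m}\cdot\r=0}c_{\mathbf{m}}\bigr|\le\sum_{\|\mathbf{m}\|_\infty\ge q(\r)}|c_{\mathbf{m}}|$ presuppose absolute summability of the Fourier coefficients of $\log|F|$, which fails precisely when $F$ vanishes on the torus (already for $\log|1-z|$ the coefficients decay like $1/n$). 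You acknowledge this in step (4), but ``smooth, prove the estimate uniformly in $\r^{(n)}$, let the smoothing parameter tend to $0$'' is exactly the theorem: what is needed is a bound, uniform over all $\r$ with $q(\r)$ large, on the contribution of the set where $|F_{\r}(e^{2\pi it})|$ is small --- equivalently, uniform integrability of the family $\log|F_{\r^{(n)}}(e^{2\pi it})|$ --- and this is the content of Lawton's key lemma (an estimate for the Lebesgue measure of $\{t:|F_{\r}(e^{2\pi it})|\le\varepsilon\}$ depending only on $\varepsilon$, the degree data and $q(\r)$). Without such an estimate, the smoothed approximation and the limit $q(\r^{(n)})\to\infty$ cannot be interchanged; for $F$ nonvanishing on $\T^k$ the difficulty disappears (continuity plus equidistribution, i.e.\ Boyd's case), so your argument as written proves only what Boyd had already proved.

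The fallback inductive route does not rescue this. After Jensen in the last variable, $m(F)=m(c)+\sum_j m^+(\alpha_j)$ involves algebraic functions $\alpha_j$ of $z_1,\dots,z_{k-1}$, not Laurent polynomials, so the induction hypothesis does not apply to them; and the substitution $z_j\mapsto z^{r_j}$ collapses everything to one variable, so there is no ``last variable'' left in $F_{\r}$ whose Jensen integral matches that factorization of $F$. The assertion that $q(\r^{(n)})\to\infty$ ``forces the relevant truncated vectors to have large $q$'' is also not something you can take for granted: $q$ of a projection or sub-vector can stay bounded while $q(\r^{(n)})\to\infty$, and controlling this is again nontrivial. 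So the proposal correctly identifies where the difficulty lies but does not close it; to make it a proof you would need to establish the uniform small-value (or uniform integrability) estimate, which is the heart of \cite{L}.
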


Earlier Boyd  had proved this result when $F$ does not vanish on the $k$-torus $\T^k$, this being a special case of \cite[Lemma 1]{Bo1}, which states that for a continuous function $f:\T^k\to\C$
\[
\lim_{n\to\infty}\int_\T f\left(z^{\r^{(n)}}\right)dz=\int_{\T^k}f(\z_k)d\z_k
\]
for the same sequence of vectors $(\r^{(n)})$.

\begin{proposition} [{{Smyth \cite[Cor. 2]{Sm1}}}]\label{P-lbd} For a Laurent polynomial $F(\z_k)=\sum_{\j\in J}c(\j)\z_k^{\j}\in\C[z_1,\dots,z_k]$, where $J\subseteq\Z^k$, let the polytope $\mathcal C(F)\in\R^k$  be the convex hull of those $\j\in J$ with $c(\j)\ne 0$. Then
\[
\max_{\j \text{ an extreme point of }\mathcal C(F)}\log|c(\j)|\le m(F)\le\log\left(\sum_{\j\in J}|c(\j)|\right)
\]
In particular, $m(F)\ge 0$ when $F$ has integer coefficients.
\end{proposition}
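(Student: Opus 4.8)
The plan is to prove the two inequalities separately, the upper one being essentially immediate and the lower one requiring a fibring argument over the torus. For the upper bound I would apply Jensen's inequality (concavity of $\log$) to the probability measure $d\z_k$ on $\T^k$:
\[
m(F)=\int_{\T^k}\log|F(\z_k)|\,d\z_k\le\log\int_{\T^k}|F(\z_k)|\,d\z_k,
\]
and then use that every monomial $\z_k^{\j}$ has modulus $1$ on $\T^k$ to get $\int_{\T^k}|F|\le\sum_{\j\in J}|c(\j)|$. This yields $m(F)\le\log\bigl(\sum_{\j\in J}|c(\j)|\bigr)$ at once.

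For the lower bound, fix an extreme point $\j_0$ of $\mathcal C(F)$. The support $\{\j\in J:c(\j)\ne0\}$ is finite and $\j_0$ is a vertex of its convex hull, so $\j_0$ lies outside the (compact) convex hull of the remaining support points; by the separating hyperplane theorem there is a real vector, and hence — perturbing it, since only finitely many strict inequalities must be preserved — an integer vector $\r=(r_1,\dots,r_k)\in\Z^k$ with $\r\cdot\j_0>\r\cdot\j$ for every support point $\j\ne\j_0$. The idea is then to slice the integral of $\log|F|$ over $\T^k$ along the curves $s\mapsto(u_1 s^{r_1},\dots,u_k s^{r_k})$, $s\in\T$, parametrised by $\u=(u_1,\dots,u_k)\in\T^k$. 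Setting
\[
g_{\u}(s)=F(u_1 s^{r_1},\dots,u_k s^{r_k})=\sum_{\j\in J}c(\j)\,\u^{\j}\,s^{\r\cdot\j},
\]
we obtain a nonzero one-variable Laurent polynomial whose highest-degree term, by the choice of $\r$, is $c(\j_0)\u^{\j_0}s^{\r\cdot\j_0}$. Since the Mahler measure of a one-variable Laurent polynomial is unchanged on multiplication by a power of $s$ and is at least the modulus of the leading coefficient of the resulting ordinary polynomial, and since $|\u^{\j_0}|=1$, we get $m(g_{\u})\ge\log|c(\j_0)|$ for every $\u\in\T^k$.

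It remains to check that averaging $m(g_{\u})$ over $\u$ recovers $m(F)$. Writing $m(g_{\u})=\int_{\T}\log|F(u_1 s^{r_1},\dots,u_k s^{r_k})|\,ds$ and interchanging the order of integration (legitimate since $\log|F|$ is integrable over $\T^k$), for each fixed $s\in\T$ the map $\u\mapsto(u_1 s^{r_1},\dots,u_k s^{r_k})$ is translation of the compact group $\T^k$ by $(s^{r_1},\dots,s^{r_k})$, hence preserves Haar measure, so the inner integral equals $\int_{\T^k}\log|F(\w)|\,d\w=m(F)$. Therefore
\[
m(F)=\int_{\T^k}m(g_{\u})\,d\u\ge\log|c(\j_0)|,
\]
and taking the maximum over extreme points $\j_0$ completes the lower bound. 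For the final assertion, $\mathcal C(F)$ always has at least one extreme point, whose coefficient is nonzero; if $F$ has integer coefficients that coefficient has modulus $\ge1$, so its logarithm is $\ge0$ and hence $m(F)\ge0$.

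There is no serious obstacle here: the whole argument is short once the slicing is set up, and it is essentially the proof of \cite[Cor. 2]{Sm1}. The only two points that need a word of justification are the passage from the separating linear functional to an integer vector $\r$ (routine, as noted, since finitely many strict inequalities are stable under small perturbation) and the Fubini interchange; the latter rests only on the integrability of $\log|F|$ over $\T^k$ — i.e.\ on $m(F)$ being a well-defined finite number, which, together with the handling of the zero set of $F$ on the torus, I take as part of the standing setup rather than something to be re-proved here.
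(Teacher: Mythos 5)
Your argument is correct, and it is worth noting that the paper itself gives no proof of this proposition at all: it is quoted as an external result (Smyth \cite[Cor.~2]{Sm1}), so you have in effect supplied the missing proof rather than an alternative to one. Both halves check out: the upper bound is exactly Jensen's inequality plus the triangle inequality, and for the lower bound your slicing argument is sound --- the extreme point $\j_0$ lies outside the convex hull of the remaining (finitely many, integral) support points, so a separating functional can be perturbed and scaled to an integer vector $\r$ with $\r\cdot\j_0$ strictly largest; then $g_{\u}$ is nonzero for every $\u\in\T^k$ with leading coefficient $c(\j_0)\u^{\j_0}$ of modulus $|c(\j_0)|$, Jensen's formula in one variable gives $m(g_{\u})\ge\log|c(\j_0)|$, and the Fubini interchange is legitimate because for each fixed $s$ the substitution is a Haar-measure-preserving translation of $\T^k$, so integrability of $\bigl|\log|F|\bigr|$ over $\T^k$ (the standard fact underlying the definition of $m(F)$) bounds the double integral. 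This is essentially the classical proof of the cited corollary, so nothing further is needed.
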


Here $J$ is a set of column vectors, so that $\z_k^{\j}$, defined by \eqref{E-0}, is a monomial.
The polytope $\mathcal C(F)$ is called the {\it exponent polytope of $F.$} \cite[p. 460]{BoSpec}. Let $\dim(F)$ denote its dimension, the {\it dimension of $F$}, which is clearly at most $k$.

\begin{proposition}\label{P-nest}  Boyd's set $\mathcal{L}$ can be written as a union 
$\cup_{n=1}^\infty {\mathcal M}(F^{(n)})$, where
\[
F^{(n)}(\z_{2n})=z_1+z_3+\cdots+z_{2n-1}-(z_2+z_4+\cdots+z_{2n}).
\]
Furthermore,
\[
{\mathcal M}(F^{(1)})\subseteq {\mathcal M}(F^{(2)})\subseteq{\mathcal M}(F^{(3)})\subseteq\dots \subseteq{\mathcal M}(F^{(n)})\subseteq\cdots\quad .
\]
\end{proposition}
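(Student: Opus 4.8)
The plan is to prove the two assertions of Proposition~\ref{P-nest} separately: first that $\mathcal{L}=\cup_{n\ge 1}{\mathcal M}(F^{(n)})$, and then that the chain of inclusions holds. For the nesting, the natural idea is that $F^{(n)}$ is obtained from $F^{(n+1)}$ by a specialization of variables, so Lemma~\ref{L-00} applies. Concretely, $F^{(n)}(\z_{2n})=F^{(n+1)}(z_1,\dots,z_{2n},1,1)$, i.e.\ $F^{(n)}=F^{(n+1)}_A$ where $A\in\Z^{2n\times(2n+2)}$ is the matrix $(I_{2n}\mid 0)$ (appending two zero columns). Thus $F^{(n)}\in\mathcal P(F^{(n+1)})$, and Lemma~\ref{L-00} gives ${\mathcal M}(F^{(n)})\subseteq{\mathcal M}(F^{(n+1)})$ immediately. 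This is the easy half.

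For $\mathcal{L}=\cup_{n\ge1}{\mathcal M}(F^{(n)})$, the inclusion $\supseteq$ is clear: each ${\mathcal M}(F^{(n)})$ consists of Mahler measures of integer polynomials (in various numbers of variables), hence lies in $\mathcal{L}$. The substance is $\subseteq$: given any integer polynomial $G$ in, say, $d$ variables, I must show $m(G)\in{\mathcal M}(F^{(n)})$ for some $n$. The key observation is that $G$ has finitely many terms, say $G=\sum_{j=1}^{t}c_j\,\z_d^{\,\mathbf e_j}$ with $c_j\in\Z\setminus\{0\}$. Writing each coefficient $c_j$ as a sum of $|c_j|$ terms $\pm1$, we can realize $G$ as a specialization of the generic signed-sum polynomial: introduce $2N$ new variables where $N=\sum_j|c_j|$, matching the $+1$ coefficients to the variables $z_1,z_3,\dots$ and the $-1$ coefficients to $z_2,z_4,\dots$, and then substitute monomials in $z_1',\dots,z_d'$ for these $2N$ variables so that the $j$-th block of variables all collapse to $c_j$ copies of the monomial $\z_d^{\,\mathbf e_j}$. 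This substitution is exactly an operation of the form $(F^{(N)})_A$ for a suitable $A\in\Z^{d\times 2N}$ with entries the exponent vectors $\mathbf e_j$ (repeated with multiplicity), and it produces $G$. Hence $G=(F^{(N)})_A\in\mathcal P(F^{(N)})$ and $m(G)\in{\mathcal M}(F^{(N)})$. (One should check the degenerate cases: if $G\equiv0$ it is excluded from the definition of $\mathcal M$; and one must make sure the resulting $F^{(N)}_A$ is genuinely the polynomial $G$ and nonzero, which it is since the monomials $\z_d^{\,\mathbf e_j}$ are distinct.)

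The main obstacle I anticipate is purely bookkeeping: setting up the matrix $A$ so that the substitution $\z_{2N}\mapsto\z_d^{\,A}$ sends $z_1+z_3+\cdots+z_{2N-1}-(z_2+\cdots+z_{2N})$ precisely onto $G$, respecting both the sign pattern (positive-indexed variables $\to$ positive coefficients) and the multiplicities. The cleanest way is to list the monomials of $G$ with positive coefficient as $\mathbf f_1,\dots,\mathbf f_p$ (with multiplicity equal to the coefficient) and those with negative coefficient as $\mathbf g_1,\dots,\mathbf g_q$; then take $A$ to be the $d\times 2N$ matrix whose odd-indexed columns cycle through $\mathbf f_1,\dots,\mathbf f_p$ (padding to length $N$ — here one must be a little careful, since in general $p\ne q$, so pad the shorter list with repeats of an already-present monomial, which changes nothing after collecting terms as long as $p,q\ge1$; if $p=0$ or $q=0$ handle $\pm G$ by noting $m(-G)=m(G)$) and whose even-indexed columns cycle through $\mathbf g_1,\dots,\mathbf g_q$. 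Then $F^{(N)}_A=G$ by direct expansion, and the result follows. No hard analysis is needed here; the content of the proposition is the combinatorial identification of $\mathcal L$ with this particular nested exhaustion, and it rests entirely on Lemma~\ref{L-00} together with the trivial remark that every integer Laurent polynomial is a signed sum of monomials.
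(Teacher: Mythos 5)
Your nesting argument is fine: the substitution $z_{2n+1}=z_{2n+2}=1$, i.e.\ $A=(I_{2n}\mid 0)$, differs only cosmetically from the paper's (which identifies the last three variables of $F^{(n)}$), and Lemma \ref{L-00} then gives the inclusions. The main half, however, has a genuine gap. Every element of $\mathcal P(F^{(N)})$ has the form $\sum_{i\ \mathrm{odd}}\z_d^{\,\mathbf a_i}-\sum_{i\ \mathrm{even}}\z_d^{\,\mathbf a_i}$ with exactly $N$ monomials (counted with multiplicity) in each sum, and hence vanishes at the point $(1,1,\dots,1)$. So if $G(1,\dots,1)\neq 0$ --- equivalently, if the total positive weight $p$ differs from the total negative weight $q$ --- then $G\notin\mathcal P(F^{(N)})$ for \emph{any} $N$, and no choice of $A$ can make $G=(F^{(N)})_A$. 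Your proposed remedy, padding the shorter list with repeats of an already-present monomial, does not ``change nothing after collecting terms'': each padded slot contributes one extra copy of that monomial, so the coefficients come out wrong. For instance, with $G=2x-y$ one has $N=3$, and any assignment of the three odd and three even slots of $F^{(3)}$ yields a polynomial whose coefficients sum to $0$, which $2x-y$ does not. The parenthetical fallback $m(-G)=m(G)$ for $p=0$ or $q=0$ does not help either, since passing to $-G$ merely exchanges $p$ and $q$.

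The missing idea --- and the key step in the paper's proof --- is a normalisation before the monomial-assignment: replace $G$ by $(z_1-1)G$, which leaves the relevant Mahler-measure data unchanged (Corollary \ref{C-cyccc}, or simply $m((z_1-1)G)=m(z_1-1)+m(G)=m(G)$), and which satisfies $(z_1-1)G$ evaluated at $(1,\dots,1)$ equal to $0$, so that its positive and negative weights agree. With $n$ equal to this common weight (the paper's choice $n=\sum_{c(\mathbf j)>0}c(\mathbf j)$), your assignment of monomials-with-multiplicity to the $n$ odd and $n$ even slots then works exactly, giving $(z_1-1)G=F^{(n)}_A$ and hence $m(G)\in\mathcal M(F^{(n)})$. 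Apart from this omission your construction is the same combinatorial identification as the paper's; without it, the inclusion $\mathcal L\subseteq\cup_n\mathcal M(F^{(n)})$ is not established.
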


\begin{proof} 
For a given $F(\z_k)=\sum_{\j\in J}c(\j)\z_k^{\j}\in\Z[z_1,\dots,z_k]$, we know that ${\mathcal M}((z_1-1)F(\z_k))={\mathcal M}(F(\z_k))$, by Corollary \ref{C-cyccc}. So, replacing $F$ by $(z_1-1)F$, if necessary, we can assume that $\sum_{\j\in J}c(\j)=F(1,\dots,1)=0$. Choose
\[
n:= \sum_{\j\in J \text{ with } c(\j)>0}c(\j).
\]
Then, for each $\j$ with $c(\j)>0$ replace $c(\j)$ of the $z_{2i-1}$'s by $\z_k^\j$, and for 
each $\j$ with $c(\j)<0$ replace $(-c(\j))$ of the $z_{2i}$'s by $\z_k^\j$. This gives us the polynomial $F$ in the form $F^{(n)}_A$, where $A$ is a matrix where for each $\j\in J$ the matrix $A$ has $|c(\j)|$ of its columns equal to $\j$. Hence $m(F)\in{\mathcal M}(F^{(n)})$
for this value of $n$. 

To show that the sequence of sets $\left({\mathcal M}(F^{(n)})\right)$ are nested, it is enough to observe that 
\[
F^{(n-1)}(\z_{2n-2})=F^{(n)}(z_1,z_2,\dots,z_{2n-3},z_{2n-2},z_{2n-2},z_{2n-2}),
\]
so that
 $F^{(n-1)}$  is of the form $F^{(n)}_A$ for some (easily written down) matrix $A$, and hence that $F^{(n-1)}\in {\mathcal P}(F^{(n)})$. Applying Lemma \ref{L-00}, we see that   ${\mathcal M}(F^{(n-1)})\subseteq{\mathcal M}(F^{(n)})$, as claimed.

\end{proof}

\section{The Saturated Hermite Normal Form (SHNF) of an integer matrix}\label{S-mat} The canonical form for integer matrices which we now describe is a variant of the classical (row-echelon) Hermite Normal Form.
Recall that a nonzero integer $\l\times k$ matrix $A=(a_{ij})$ is in {\it Hermite Normal Form} (HNF) if it has the following properties:
\bee
\item For some integer $r$ with $0\le r<k$ the leftmost $r$ columns of $A$ are zero;
\item For some integer $s$ with $0\le s<\l$ the bottom $s$ rows of $A$ are zero;
\item For $1\le i\le \ell-s$ there are integers $j_i$ satisfying  
$$
r+1 = j_1<j_2<\cdots <j_{\ell-s}\le k
$$
 such that $a_{i,j_i}\in\N$,
$a_{ij'}=0$ for $j'<j_i$ and $0\le a_{i'j_i}<a_{i,j_i}$ for $i'< i$.
\ene
Note that $A$ has rank $\ell-s$, and that $0\le \ell-s\le k-r$.

For any nonzero integer $\l\times k$ matrix $A$ there is some $U\in\GL_\ell(\Z)$ such that $UA$ is in Hermite normal form -- see \cite[Ch. II, Section 6]{N}. (Multiplying by $U$ corresponds to applying a succession of row operations of the following kinds to $A$:
 \bei
\item (`Swap') Swap two rows;
\item (`Sign-change') Cange the sign of one row;
\item (`Add') Add an integer multiple of one row to a different row.
\eni
This matrix $H=UA$ is then called the {\it Hermite Normal form of $A$}.

We now give a  characterisation of matrices in {\it Saturated Hermite Normal Form} (SHNF) alternative to the definition given in the Introduction. To streamline our
 discussion, we ignore any all-zero rows in our matrices.

\begin{proposition} \label{P-Y}An nonzero integer $\l\times k$ matrix $A$ with rows $\underline{a}_1,$ $\underline{a}_2,$ $\dots,\underline{a}_\ell$ is in SHNF if it is in HNF, and, additionally,
for $i=1,2,\dots,\ell$ and every choice of integers $u_{i+1},\dots,u_\ell$ the greatest common divisor ($\gcd$) of all the components of 
$\underline{a}_i+\sum_{j=i+1}^\ell u_j\underline{a}_j$ is $1$.
\end{proposition}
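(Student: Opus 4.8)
The plan is to reduce the statement to a single lattice-theoretic equivalence and prove that directly. As permitted by the convention in the passage preceding the Proposition, I discard the all-zero rows of $A$, so that $A$ has rank $\ell$ and its rows $\underline a_1,\dots,\underline a_\ell$ are $\Z$-linearly independent; concretely, on the pivot columns $j_1<\cdots<j_\ell$ the matrix $A$ restricts to an upper-triangular matrix with positive diagonal $a_{1,j_1},\dots,a_{\ell,j_\ell}$. Let $\Lambda\subseteq\Z^k$ be the lattice spanned by the $\underline a_i$ and $V$ the $\R$-vector space they span. By the definition of SHNF in the Introduction, $A$ is in SHNF precisely when $A$ is in HNF and $\Lambda$ is \emph{saturated}, i.e.\ $V\cap\Z^k=\Lambda$. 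Since $A$ is already assumed to be in HNF, the Proposition amounts to the assertion that, for any integer matrix of rank $\ell$, $\Lambda$ is saturated if and only if the displayed $\gcd$ condition holds; the HNF hypothesis itself plays no further role, serving only to make ``SHNF'' meaningful.

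First I would record the standard reformulation that $\Lambda$ is saturated if and only if $\Z^k/\Lambda$ is torsion-free, equivalently: there is no prime $p$ and vector $v\in\Z^k\setminus\Lambda$ with $pv\in\Lambda$. One implication is immediate. For the other, $V\cap\Z^k$ is a discrete subgroup of $V$ containing $\Lambda$, hence a lattice of the same rank $\ell$, so $\Lambda$ has finite index in it; thus if $\Lambda$ is not saturated there is $v\in(V\cap\Z^k)\setminus\Lambda$ of some finite order $m>1$ modulo $\Lambda$, and for any prime $p\mid m$ the vector $\tfrac{m}{p}v$ lies in $\Z^k\setminus\Lambda$ with $p\cdot\tfrac{m}{p}v\in\Lambda$.

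Next I would prove both implications in contrapositive form. If the $\gcd$ condition fails, then for some $i$ and integers $u_{i+1},\dots,u_\ell$ a prime $p$ divides every component of $w:=\underline a_i+\sum_{j>i}u_j\underline a_j$, so $w=pw'$ with $w'\in\Z^k$; here $w\in\Lambda$ but $w'\notin\Lambda$, since $w'=\sum_{j}d_j\underline a_j$ with $d_j\in\Z$ would give $w=\sum_j(pd_j)\underline a_j$, and comparing the coefficient of $\underline a_i$ (using $\Z$-linear independence of the $\underline a_j$) would force $pd_i=1$, impossible for $p\ge 2$; hence $\Lambda$ is not saturated. Conversely, suppose $\Lambda$ is not saturated and pick a prime $p$ and $v\in\Z^k\setminus\Lambda$ with $pv\in\Lambda$, say $pv=\sum_{i=1}^\ell c_i\underline a_i$ with $c_i\in\Z$. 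Not every $c_i$ is divisible by $p$, else $v=\sum_i(c_i/p)\underline a_i\in\Lambda$; let $i$ be the least index with $p\nmid c_i$. Reducing $\sum_{i'}c_{i'}\underline a_{i'}=pv$ modulo $p$ componentwise, the terms of index $<i$ drop out, leaving $c_i\underline a_i\equiv-\sum_{j>i}c_j\underline a_j\pmod p$; since $c_i$ is a unit modulo $p$, multiplying by its inverse and lifting the resulting residues to integers $u_{i+1},\dots,u_\ell$ shows $p$ divides every component of $\underline a_i+\sum_{j>i}u_j\underline a_j$, so the $\gcd$ condition fails. This establishes the equivalence, hence the Proposition.

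No step here is genuinely hard; the point that has to be handled with care is the selection of the \emph{least} index $i$ with $p\nmid c_i$. The $\gcd$ condition constrains only the combinations of a row with \emph{strictly later} rows, so the witness extracted from a failure of saturation must be produced in exactly that shape, and it is the minimality of $i$ that makes the leading coefficient $c_i$ invertible modulo $p$. (An equivalent route: $\Lambda$ is saturated iff the $\gcd$ of the $\ell\times\ell$ minors of $A$ equals $1$; a prime $p$ divides all of them iff the rows of $A$ are $\F_p$-linearly dependent, and — again taking the least nonzero coefficient — this holds iff some $\underline a_i+\sum_{j>i}u_j\underline a_j$ vanishes modulo $p$.)
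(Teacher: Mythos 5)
Your proof is correct. The heart of your argument --- picking the least index $i$ with $c_i$ not divisible by $p$, inverting $c_i$ modulo $p$, and producing a combination $\underline{a}_i+\sum_{j>i}u_j\underline{a}_j$ all of whose components are divisible by $p$ --- is exactly the arithmetic step in the paper's proof (there the coefficients are the numerators $n_j$ over a least common denominator $g$). Where you differ is in the framing of saturation: the paper works directly with the definition, taking real coefficients $\la_1,\dots,\la_\ell$ with $\sum_j\la_j\underline{a}_j\in\Z^k$, and uses the HNF row-echelon structure to deduce that the $\la_j$ are rational before introducing the common denominator and the mod-$p$ argument; you instead replace saturation by torsion-freeness of $\Z^k/\Lambda$ (no prime $p$ and $v\in\Z^k\setminus\Lambda$ with $pv\in\Lambda$), justified by a finite-index argument, which lets you bypass real and rational coefficients entirely. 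A byproduct of your route, which you rightly flag, is that the equivalence between saturation and the one-sided gcd condition needs only $\Z$-linear independence of the rows, not the echelon shape; the paper's backward direction, by contrast, genuinely leans on HNF to get rationality of the $\la_j$. Both arguments are complete; yours is slightly more structural and a touch more general, the paper's is more self-contained in that it never leaves the defining condition. One small point to keep explicit in your first contrapositive: the vector $w=\underline{a}_i+\sum_{j>i}u_j\underline{a}_j$ is nonzero (by independence of the rows), so its gcd failing to be $1$ really does furnish a prime divisor $p$; your $pd_i=1$ argument then correctly shows $w/p\notin\Lambda$.
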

\begin{proof} First assume that $H$ is in SHNF as defined in the introduction. Then, for any real numbers $\la_1,\dots,\la_\l$, if $\sum_{j=1}^\l \la_j \underline{a}_j\in\Z^k$, it follows that the $\la_j$ are all integers.  But if $\underline{a}_i+\sum_{j=i+1}^\ell u_j\underline{a}_j$
has components with $\gcd=g$ for some integers $u_{i+1},\dots,u_{k}$ then 
$\frac{1}{g}\underline{a}_i+\sum_{j=i+1}^\ell \frac{u_j}{g}\underline{a}_j\in\Z^k$. Therefore, by our assumption, $g$ must be $1$.

Conversely, assume $A$ is in HNF and that for each $i=1,2,\dots,\l$ and for each choice of integers $u_i$ that the components of  
$\underline{a}_i+\sum_{j=i+1}^\ell u_j\underline{a}_j$ have $\gcd=1$. For any set of real $\la_j$'s, assume that the  sum $\sum_{j=1}^\l \la_j\underline{a}_j$ has integer components. From the fact that $A$ is in row-echelon form, we see successively that $\la_1\in\Q, \la_2\in\Q,\dots,\la_\l\in\Q$. Write $\la_j=n_j/g$, where the $n_j$ are integers with $\gcd=1$, and $g$ (the least common denominator of the $\la_j$'s) is a positive integer. Now suppose that $g>1$, with say $p$ a prime dividing $g$. Then the  sum $\sum_{j=1}^\l n_j\underline{a}_j$ has all components divisible by $p$.
Suppose that $h$ is the smallest index for which $p\nmid n_h$, and that $vn_h=1\pmod{p}$. Then the sum 
$\underline{a}_h+vn_{h+1}\underline{a}_{h+1}+\dots+vn_{\ell}\underline{a}_\ell$ also has all its components divisible by $p$. But this contradicts our $\gcd=1$ assumption above. Hence $g=1$, so that the $\la_j$ are all integers.
\end{proof}

If we allow ourselves to apply the following  operation to $A$, additional to `Swap', `Sign-change' and `Add' above:
 \bei
\item (`Scale') If a row has all entries divisible by $g>1$, then divide that row by $g$,
\eni
then we can reduce $A$ to a matrix in SHNF. We call this matrix the {\it Saturated Hermite Normal Form of $A$}.

\

{\bf SHNF algorithm.\quad} First put $A\in\Z^{\l\times k}$ into HNF -- call it $A$ again. Ignore any zero rows at the bottom of $A$, so that we can assume that $A$ has rank $\l\le k$. We now start an $\l$-step process. The first step is to divide row $\l$ by the $\gcd$ of its entries. Now add a suitable multiple of (the new) row $\l$ to row $\l-1$ so that the $\gcd$ of the entries of (the new) row $\l-1$ is as large as possible. Divide this row by this $\gcd$.
Now add  suitable multiples of  rows $\l$ and row $\l-1$ to row $\l-2$ so that the $\gcd$ of the entries of (the new) row $\l-2$ is as large as possible. Divide this row by this $\gcd$.
Continue in this way. For the $\l$-th step, add  suitable multiples of  rows $\l,\l-1,\dots,2$ to row $1$ so that the $\gcd$ of the entries of (the new) row $1$ is as large as possible. Divide this row by this $\gcd$. Finally, restore the resulting matrix to HNF by suitable row operations.

\

Incidentally, it should be clear from the row-echelon structure of $A$ that the number of choices for sums of integer multiples of the lower rows
 to be added to the current row to give a $\gcd$ greater than $1$ can be readily bounded.

\begin{proposition}\label{P-55} For a matrix $A\in\Z^{\l\times k}$, the above algorithm does indeed find its SHNF,  $H\in\Z^{\l\times k}$. Furthermore,  we have $A=VH$ for some nonsingular $V\in\Z^{\l\times \l}$.
\end{proposition}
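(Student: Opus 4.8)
The plan is to analyze the SHNF algorithm step by step, tracking the row operations as left-multiplication by integer matrices. First I would recall that putting $A$ into HNF means writing $UA = A'$ for some $U \in \GL_\ell(\Z)$, so that $A = U^{-1}A'$ with $U^{-1}$ also in $\GL_\ell(\Z)$; after discarding zero rows we may assume $A$ has rank $\ell \le k$. For the correctness claim, I would argue by reverse induction on the row index: after the algorithm processes row $\ell$, then $\ell-1$, and so on down to row $1$, and finally restores HNF, the resulting matrix $H$ satisfies the gcd condition of Proposition \ref{P-Y}. The key observation is that when we add integer multiples of rows $\ell, \ell-1, \dots, i+1$ to row $i$ and then divide row $i$ by the gcd $g_i$ of its entries, the choice of multiples is made to \emph{maximize} $g_i$; I must check that this greedy maximization genuinely achieves the property that for row $i$ of the final matrix, $\underline a_i + \sum_{j>i} u_j \underline a_j$ has gcd $1$ for \emph{every} choice of integers $u_j$ — not just the one used in the algorithm.

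The main obstacle is precisely this last point: showing the greedy step cannot be improved later. The subtlety is that dividing row $i$ by $g_i$ changes row $i$, which is then available to be added to rows above it, but rows $i+1, \dots, \ell$ were already fixed (and scaled) before row $i$ was processed. So I would argue as follows. Suppose, after the algorithm finishes and HNF is restored, that for some $i$ there exist integers $u_{i+1}, \dots, u_\ell$ with $\gcd\bigl(\underline a_i + \sum_{j>i} u_j \underline a_j\bigr) = g > 1$, say divisible by a prime $p$. Take the largest such index $i$ (so rows $i+1, \dots, \ell$ already satisfy the gcd-$1$ condition — this uses that the final HNF-restoration only does row operations among all rows, so I need to be slightly careful and instead take the statement just after the $\ell$-step process, before the final HNF cleanup, and separately check the cleanup preserves saturatedness). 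Then at the stage of the algorithm where row $i$ was being processed, the rows below were (up to the reordering and unimodular changes of the cleanup, which I handle separately) spanning the same lattice, so the combination $\underline a_i + \sum_{j>i} u_j \underline a_j$ was available as a candidate; its gcd being divisible by $p > 1$ contradicts the maximality of the gcd $g_i$ chosen, \emph{unless} $p \mid g_i$ already, but then we divided it out, contradiction. I would also need to confirm the final HNF restoration, which is left-multiplication by some $U' \in \GL_\ell(\Z)$, preserves the SHNF gcd property; this follows because the SHNF condition of Proposition \ref{P-Y} is equivalent to the $\R$-span-meets-$\Z^k$ condition of the Introduction, which depends only on the row lattice and not on the particular basis, combined with being in HNF.

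For the second assertion, $A = VH$ with $V \in \Z^{\ell\times\ell}$ nonsingular: I would simply observe that every operation in the algorithm — `Swap', `Sign-change', `Add', `Scale', and the final HNF restoration — is realized by left-multiplying the current matrix by an integer matrix. The `Swap', `Sign-change', `Add' operations and the HNF restoration use matrices in $\GL_\ell(\Z)$. The `Scale' operation, dividing a row by $g$, is left-multiplication by a diagonal-type integer matrix (identity except $1/g$ in one diagonal slot) applied to the current matrix — but crucially this is only legal because that row's entries were all divisible by $g$, so the product stays integral. Composing all these, we get $H = W A$ where $W$ is a product of integer matrices, hence $W \in \Z^{\ell\times\ell}$; and since each factor is invertible over $\Q$ (the $\GL_\ell(\Z)$ factors trivially, the scaling factors because $1/g \ne 0$), $W$ is nonsingular. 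Then $A = W^{-1}H$, and I would set $V = W^{-1}$. The one thing to verify is that $V = W^{-1}$ has integer entries: this follows because $A$ and $H$ have the same row lattice's \emph{saturation}? No — more directly, the scaling steps only ever shrink entries, so running the inverse operations (multiply a row by $g$) on $H$ recovers $A$, and "multiply a row by $g$" is left-multiplication by an integer matrix; thus $V$, the product of these inverse operations in reverse order, is manifestly integral. This gives the claim, and I do not expect this part to present real difficulty; the whole weight of the proof is in the correctness of the greedy step.
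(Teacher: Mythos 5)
Your proposal is correct and follows essentially the same route as the paper: correctness is proved via the gcd characterisation of Proposition \ref{P-Y} by a contradiction with the greedy maximality of the gcd chosen at each step, and the factorisation $A=VH$ comes from tracking every operation (unimodular row operations and the `Scale' step) as left multiplication by an integer matrix whose inverse is again integral. Your extra care about the final HNF restoration (saturation depends only on the row lattice, which $\GL_\ell(\Z)$ multiplication preserves) and about the integrality of $V$ only fills in details the paper's proof states tersely.
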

\begin{proof} Suppose that the algorithm does not find the SHNF $H$ of $A$. Then, by Proposition \ref{P-Y}, some row $\underline{a}_j$ of $H$ has the property that, for some integrs $u_j$,  the sum 
$\underline{a}_i+\sum_{j=i+1}^\ell u_j\underline{a}_j$  has all its components divisible by some $g>1$. But this readily leads to the conclusion that, in applying the algorithm to row $i$,
the sums of multiples of the lowers rows that were added did not give the largest possible $\gcd$ of the components, which it should have. Thus the algorithm works.

On applying the algorithm to $A$, each operation corresponds to either left multiplication of $A$ by an element of $\GL_\l(\Z)$ or left multiplication by $D^{-1}$, where $D$ is an $\l\times\l$ diagonal matrix, all entries except one being $1$, and the other entry being some integer $g>1$. Hence indeed $A=VH$ for some nonsingular $V\in\Z^{\l\times \l}$.
\end{proof}

{\bf Example.\quad} Consider the matrix $$A=\left( \begin{array}{cccc}
1 & 1 & 4 & 0 \\
0 & 2 & 3 & 3\\
0 & 0 & 5 & 1 \end{array} \right),$$
which is already in HNF. However, it is not in SHNF. To find its SHNF, add row 3 to row 2, and divide the modified row 2 by $2$. Finally, subtract the twice-modified row 2 from row 1, to obtain $$\left( \begin{array}{cccc}
1 & 0 & 0 & -2 \\
0 & 1 & 4 & 2\\
0 & 0 & 5 & 1 \end{array} \right),$$ the SHNF of $A$.

\

The following result is an easy exercise in applying the SHNF algorithm.

\begin{proposition}\label{P-kk}
If $A\in\Z^{k\times k}$ is nonsingular then its SHNF is the $k\times k$ identity matrix.
\end{proposition}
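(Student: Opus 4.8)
The plan is to run the SHNF algorithm on $A$ directly and check that, because $A$ has full rank, at each step the row being processed collapses to a standard basis vector. (A short conceptual alternative is noted at the end.)

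First I would put $A$ into HNF and rename it $H=(h_{ij})$. Since $A$ is nonsingular, $H$ has rank $k$, so in the notation of the definition of HNF above we have $r=s=0$ and $j_i=i$ for every $i$; that is, $H$ is upper triangular with diagonal entries $h_{ii}\ge 1$ and $h_{ij}=0$ for $j<i$, and by the echelon structure each row $i$ is zero in columns $1,\dots,i-1$. Next I would run the $k$ steps of the SHNF algorithm, which process the rows in the order $k,k-1,\dots,1$, each step modifying only the row it processes, so that row $i$ still has its original HNF value when the $i$-th step begins. I claim the step processing row $i$ turns that row into the standard basis vector $e_i$; granting this for rows $k,\dots,i+1$, consider the step that processes row $i$. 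At this point rows $i+1,\dots,k$ have become $e_{i+1},\dots,e_k$, and row $i$ is still $(0,\dots,0,h_{ii},h_{i,i+1},\dots,h_{ik})$. Since $e_{i+1},\dots,e_k$ are zero in column $i$, adding any integer combination of them to row $i$ leaves the $i$-th entry equal to $h_{ii}$; hence the $\gcd$ of the entries of the modified row $i$ always divides $h_{ii}$, and it attains $h_{ii}$ exactly when we add $-h_{ij}e_j$ for $j=i+1,\dots,k$, killing all entries of row $i$ except the $i$-th. So the algorithm makes this choice, obtaining $(0,\dots,0,h_{ii},0,\dots,0)$, and dividing by the $\gcd$ $h_{ii}$ yields $e_i$. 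This proves the claim, so after all $k$ steps the matrix is $I_k$; as $I_k$ is already in HNF, the concluding step that restores the matrix to HNF does nothing, and the SHNF of $A$ is $I_k$.

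I do not expect a genuine obstacle here; the only points needing care are that the $\gcd$ is indeed maximised by the obvious choice — immediate, since the $i$-th entry of row $i$ cannot be changed and the $\gcd$ must divide it — and that operating on the lower rows never disturbs the pivot column of the row currently being processed. Alternatively, one could argue conceptually: for nonsingular $A\in\Z^{k\times k}$ the $\R$-span of the rows of $A$ is all of $\R^k$, so the saturation of the row lattice of $A$ is $\Z^k$; since HNF is a canonical form for full-rank sublattices of $\Z^k$, the only HNF matrix whose rows generate $\Z^k$ is $I_k$, and by Proposition \ref{P-55} the algorithm outputs precisely this SHNF of $A$.
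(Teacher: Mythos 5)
Your proof is correct and matches the paper's intent exactly: the paper gives no written proof, noting only that the result is ``an easy exercise in applying the SHNF algorithm'', which is precisely what you carry out (and your conceptual alternative via saturation of a full-rank row lattice is equally valid). One small imprecision: the maximal gcd $h_{ii}$ is attained by any choice $u_j\equiv -h_{ij}\pmod{h_{ii}}$, not only by $u_j=-h_{ij}$, but since any such choice yields a unipotent upper-triangular matrix whose rows span $\Z^k$, your final restore-to-HNF step still produces $I_k$, so the argument stands.
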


The next result is needed for the proof of Theorem \ref{T-2}.

\begin{proposition}\label{P-77} Suppose that $A\in\Z^{{\l'}\times k}$ has rank $\l\le k$ and that its Saturated Hermite Normal Form is 
$H'=\left( \begin{array}{c} H \\
{\mathbf 0} \end{array} \right)$,
 where $H\in\Z^{\l\times k}$ and ${\mathbf 0}$ is the $(\l'-\l)\times k$ zero matrix. Then $m(F_{A})=m(F_H)$.
\end{proposition}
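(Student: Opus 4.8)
The plan is to reduce $m(F_A)$ to $m(F_H)$ in two stages, using the factorisation of $A$ coming from the SHNF algorithm together with Proposition \ref{P-1}. By Proposition \ref{P-55} applied to $A$ (after ignoring trivial all-zero rows, which change nothing in $F_A$), we may write $A = V H'$ for some nonsingular $V \in \Z^{\l' \times \l'}$, where $H' = \left(\begin{array}{c} H \\ \mathbf 0\end{array}\right)$ is the SHNF of $A$. By the last assertion of Proposition \ref{P-1}, $m(F_{VH'}) = m(F_{H'})$, so it remains to show $m(F_{H'}) = m(F_H)$.

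For this last equality, I would argue that adjoining the extra $\l' - \l$ all-zero rows at the bottom of $H$ does not change the Mahler measure. Indeed, writing $\l'' = \l' - \l$, the polynomial $F_{H'}(\z_{\l'}) = F(\z_{\l'}^{H'})$ depends only on the variables $z_1, \dots, z_\l$ (the monomials $\z_{\l'}^{H'}$ involve no exponents from rows $\l+1, \dots, \l'$, since those rows are zero), so $F_{H'}(z_1,\dots,z_{\l'}) = F_H(z_1,\dots,z_\l)$ as a Laurent polynomial. Then in the defining integral \eqref{E-1} for $m(F_{H'})$, the integrand is independent of $t_{\l+1},\dots,t_{\l'}$, and integrating those variables out over $[0,1]^{\l''}$ contributes a factor of $1$. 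Hence $m(F_{H'}) = m(F_H)$, which combined with the previous paragraph gives $m(F_A) = m(F_H)$.

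I do not expect any serious obstacle here: both ingredients (the $A = VH'$ factorisation and the $\GL$-invariance of Mahler measure) are already established in Propositions \ref{P-55} and \ref{P-1}, and the fact that padding with zero rows leaves $m$ unchanged is immediate from the integral representation. The only point requiring a little care is bookkeeping the dimensions of the matrices and being explicit that $F_{H'}$ genuinely coincides, as a Laurent polynomial in $z_1,\dots,z_\l$, with $F_H$, so that no contribution is lost when the extra integration variables are removed; this is purely formal once one unwinds the definition \eqref{E-0} of $\z_{\l'}^{H'}$.
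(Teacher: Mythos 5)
Your proof is correct and follows essentially the same route as the paper: factor $A = VH'$ with $V$ nonsingular (Proposition \ref{P-55}), invoke Proposition \ref{P-1} to get $m(F_A)=m(F_{H'})$, and note that $F_{H'}$ coincides with $F_H$ since the zero rows contribute only trivial exponents. The paper dismisses this last step with ``$F_{H'}=F_H$, by definition''; your explicit remark that the redundant integration variables integrate out harmlessly is just a fuller justification of the same point.
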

\begin{proof} We have that $A=VH'$ for some nonsingular $V\in\Z^{k\times k}$. Hence $m(F_{A})=m(F_{H'})$ by Proposition \ref{P-1}. Furthermore, $F_{H'}=F_{H}$, by definition.
\end{proof}

\section{Proof of  Theorem \ref{T-1}}\label{S-proof}
\begin{proof}
For the proof, we first show that every $m(F_A)$ lies in $\overline{{\mathcal M}_1(F)}$. Then we show for any  $\r^{(1)}, \r^{(2)},\dots,\r^{(n)},\dots$ in $\Z^k$ with $m\left(F_{\r^{(n)}}\right)$ converging, that its limit is of the form $m(F_A)$ for some $A$.

So, take any $A\in\Z^{\l\times k}$,  and let $\r^{(n)}$ be the sequence $(1,n,n^2,\dots,n^{\l-1})$ from Lemma \ref{L-1}. Because $q(\r^{(n)})\to\infty$ as $n\to\infty$ we can apply Proposition \ref{P-2} to $F_A$ to obtain
\[
\lim_{n\to\infty}m\left(F_A(\r^{(n)})\right)=m(F_A).
\]
 Now for $\r=\r^{(n)}$ we have $F_A(\r)=F_{\r^A}$, so that $m(F_A(\r))=m(F_{\r^A})$. 
Hence, for the sequence $\r^A=(\r^{(n)})^A\in\Z^k$ we have
\[
\lim_{n\to\infty}m(F_{\r^A})=m(F_A).
\]
Hence ${\mathcal M}(F)=\{m(F_A)\, : \, A\in\Z^{\l\times k} \}\subseteq\overline{{\mathcal M}_1(F)}$.

To prove that these are the only limit points of ${\mathcal M}_1(F)$, we take any sequence 
\[
\r^{(1)}, \dots,\r^{(n)},\dots\in \Z^k
\]
 for which $m(F_{\r^{(n)}})$ converges. We separate the proof into three cases, doing the trivial case $k=1$ first and then, for $k\ge 2$,  separating the cases where the sequence $q(\r^{(n)})$ is either unbounded or bounded.

 {\bf Case 1: $\mathbf{k=1}$}. Here $F=F(z_1)$ and our sequence is $\{m(F(z^{r_1}))\}$, for some sequence of nonzero integers $r_1$. But, applying  Proposition \ref{P-1} with $n=1$ and $A=(r_1)$, we have that the sequence $\{m(F(z^{r_1}))\}$ is constant, each term being $m(F(z))$. 

\

 {\bf Case 2: $\mathbf{k\ge 2}$ and $\mathbf{q(\r^{(n)})}$  unbounded}. Then there is a subsequence of the $\r^{(n)}$'s
for which $\lim_{n\to\infty}q(\r^{(n)})$ tends to infinity on that subsequence. Thus, by replacing the sequence of the $\r^{(n)}$'s by that subsequence, we can assume that, as $n\to\infty$ both that $m(F_{\r^{(n)}})$ converges and  that $q(\r^{(n)})\to\infty$. Then we can apply Proposition \ref{P-2} to conclude that $\lim_{n\to\infty}m(F_{\r^{(n)}})=m(F)$.

\

 {\bf Case 3: $\mathbf{k\ge 2}$ and  $\mathbf{q(\r^{(n)})}$  bounded}. Our proof is by induction. From Case 1, we already know that the result is true for $k=1$. We now assume $k\ge 2$ and that the result is true for all Laurent polynomials $F$ in fewer than $k$ variables.

Take a convergent sequence of real numbers $m(F_{\r})$ for $\r=\r^{(n)}\quad(n=1,2,3,\dots)$ such that the integer sequence $(q(\r))$ is bounded. Then there are only finitely many possibilities for the nonzero vectors $\s\in\Z^k$ in the definition of $q$ such that $\s\cdot\r=0$. Hence, by the Pigeonhole Principle,  we can find an infinite subsequence of integers $n$ for which the corresponding sequence of vectors $\s$ is constant. On replacing our original sequence $n=1,2,3,\dots$ by this subsequence, we can assume that {\it all} $\r$ satisfy $\r\cdot \s=0$.

Next, take a $(k-1)\times k$ integer matrix $U$ whose rows are a basis  of the sublattice $L_{\s}:=\{\r\in\Z^k\mid \r\cdot \s=0\}$ of $\Z^k$. Then each $\r\in L_{\s}$ can be written as $\c U$ for some $\c\in\Z^{k-1}$. Then  writing $G(\z_{k-1}):=F_U(\z_{k-1})$,
a Laurent polynomial in at most $k-1$ variables, we have from Lemma \ref{L-minus1} that
$G_\c=F_{\c U}=F_\r.$ Hence, applying the induction hypothesis to $G$, or Case 2 if $k-1\ge 2$ and the sequence $\{q(\c)\}$ is unbounded, we see that the sequence 
$\{m(F_{\r}(z))\}=\{m(G_\c(z))\}$ has a limit of the form $m(G_B)$ for some $B\in\Z^{\l\times(k-1)}$ and some $\ell\le k-1$.

Next, we note that, by Lemma \ref{L-minus1} again, $G_B=F_A$, where $A=BU\in\Z^{\l\times k}$. Hence $\overline{{\mathcal M}_1(F)}\subseteq 
\{m(F_A)\, : \, A\in\Z^{\l\times k} \}={\mathcal M}(F)$, and so 
$\overline{{\mathcal M}_1(F)} = {\mathcal M}(F)$, as claimed.
\end{proof}

\section{Proof of  Theorem \ref{T-1.5}}\label{S-proof1.5}

\begin{proof} Suppose that $0\in{\mathcal M}(F)$, but that it is not isolated. Then, because this set is the closure of the set of measures $m(F_\r)$ of polynomials $F_\r$ for $\r\in\Z^k$, we can take a sequence of such polynomials $F_{\r^{(n)}}$ such that none of the 
$m(F_{\r^{(n)}})$ are $0$, but $\lim_{n\to\infty}m(F_{\r^{(n)}})=0$. However, by Proposition 
\ref{P-2}, this limit is $m(F)$, which is therefore $0$.

As in the proof of Theorem \ref{T-1}, we now separate three cases.

\

{\bf Case 1: $\mathbf{k=1}$}. Here $F=F(z_1)$ and our sequence is $\{m(F(z^{r_1}))\}$, for some sequence of nonzero integers $r_1$. But, as in Case 1 of the proof of Theorem \ref{T-1}, we have that
 the sequence $\{m(F(z^{r_1}))\}$ is constant, each term being $m(F)$. Hence $m(F)=0$, and so all terms of the converging sequence are $0$, contrary to our assumption.

\

{\bf Case 2: $\mathbf{k\ge 2}$ and $\mathbf{q(\r^{(n)})}$  unbounded}.
 Then, as in the proof of Case 2 of Theorem \ref{T-1}, there is a subsequence of the $\r^{(n)}$'s
for which $\lim_{n\to\infty}q(\r^{(n)})$ tends to infinity on that subsequence. Thus, by replacing the sequence of the $\r^{(n)}$'s by that subsequence, we can assume that, as $n\to\infty$ both that $m(F_{\r^{(n)}})\to 0$  and  that $q(\r^{(n)})\to\infty$.

 Hence, by Proposition \ref{P-3}, $F$ is of the form $\pm z$ times a product of  cyclotomic polynomials $C(z)$, where each occurence of the variable $z$ is replaced by a (possibly different for each occurence) monomial in $z_1,\dots,z_k$. Hence each $F_\r$ is of the form $\pm z$ times a product of  cyclotomic polynomials $C(z)$, where each occurence of the variable $z$ is replaced by a (possibly different for each occurence) power of $z$, assumed to be nonzero. So $m(F_{\r^{(n)}})=0,$ contradicting the fact that these values are all assumed to be nonzero.

\

{\bf Case 3: $\mathbf{k\ge 2}$ and $\mathbf{q(\r^{(n)})}$  bounded}. Here, we follow quite closely the induction argument in Case 3 of the proof of Theorem \ref{T-1}. Thus the result is true for $k=1$ by Case 1, so we assume that $k\ge 2$ and that the result is true for all $F$ in fewer than $k$ variables. Following that argument, we get that our sequence $\{m(F_{\r^{(n)}})\}$ has limit $m(F_{A'})$, where $A'\in\Z^{\ell\times k}$ for some $\ell\le k-1$. Thus
$m(F_{A'})=0$, and so, again  by Proposition \ref{P-3}, $F$ is of the form $\pm z$ times a product of  cyclotomic polynomials $C(z)$, where each occurence of the variable $z$ is replaced by a (possibly different for each occurence) monomial in $z_1,\dots,z_k$. From the definition of $F_{A'}$, we then see that $F$ itself has the same property. So, as in Case 2, we conclude that  $m(F_{\r^{(n)}})=0$ for all $n$, giving the same contradiction again.

\end{proof}

\section{Proof of  Theorem \ref{T-1.75}}\label{S-proof1.75}

\begin{proof} Consider  all signed partitions $\c=(c_1,c_2,\dots,c_t)$ of all positive integers $b\le B$.  So the $c_i$ are nonzero integers with $|c_1|,\dots,|c_t|$ nondecreasing and $\sum_{i=1}^t|c_i|=b$. For each such partition define the linear form
\[
F_{\c}(\z_t)=c_1z_1+\cdots+c_tz_t.
\]
As $F_\c$ is of length (the sum of the moduli of its coefficients) $b$,  all $(F_\c)_A\in\mathcal P(F_\c)$ are of length at most $b$ (there could be cancellation). Then every polynomial $F$ of length at most $B$ belongs to $\mathcal P(F_\c)$ for some such $\c$, and so the set ${\mathcal M}_B$ is the union
of all such sets ${\mathcal M}(F_\c)$. Since this is a finite union, with all the component sets being closed, and with $0$ being an isolated point of each set, it follows that  ${\mathcal M}_B$ inherits these two properties.

\end{proof}

\section{Proof of  Theorem \ref{T-2}}\label{S-proof2}

\begin{proof}
Obviously
\[
\{m(F_A)\, : \, A\in\cup_{\l=0}^k\Z^{\l\times k}, \text{  $A$ of rank $\l$ in SHNF} \}\subseteq 
 \{m(F_A)\, : \, A\in\Z^{{\l'}\times k}, \l'\ge 0 \} = {\mathcal M}(F).
\]
In the other direction, we have by   Proposition \ref{P-77} that   $m(F_A)=m(F_H)$, where $H'=\left( \begin{matrix}
H \\
{\mathbf 0} \end{matrix} \right)$ is the SHNF of $A$, with
 $H\in\Z^{\l\times k}$ of rank $\l\le \min(\l',k).$ Hence
\[
{\mathcal M}(F)=\{m(F_A)\, : \, A\in\Z^{\l'\times k} \}\subseteq 
\{m(F_H)\, : \, H\in\cup_{\l=0}^k\Z^{\l\times k}, \text{  $H$ of rank $\l$ in SHNF} \}.
\]
\end{proof}

\section{The Lehmer element of ${\mathcal M}(F^{(n)})$}\label{S-Leh}
Recall from the introduction that the Lehmer element $\l_{\min}(F)$ of $\mathcal M(F)$ is its smallest positive element.
Now $F^{(1)}=z_1-z_2=z_2(z_1z_2^{-1}-1)$, so $\mathcal M(F^{(1)})=\{0\}$ by 
Proposition \ref{P-3}, and $\l_{\min}(F^{(1)})$ is not defined.
Next, $z^4-z^3-z^2+1\in\mathcal P(F^{(2)})$, and 
\[
m(z^4-z^3-z^2+1)=m((z^3-z-1)(z-1))=m(z^3-z-1)=\log(1.3247\dots),
\]
 so that $\l_{\min}(F^{(2)})\le m(z^3-z-1)$.  But in fact $\l_{\min}(F^{(2)})= m(z^3-z-1)$, by virtue of a result of Dobrowolski \cite[Proposition 2]{Dob}, who in fact proved that $m(z^3-z-1)$ was the minimal Mahler measure of {\it all} integer noncyclotomic quadrinomials.

Also $z^{11}-z^9-z^8+z^3+z^2-1\in\mathcal P(F^{(3)})$, and 
\[
m(z^{11}-z^9-z^8+z^3+z^2-1)=m(L(z)(z-1))=m(L(z))=\log(1.1762808\dots),
\]
where Lehmer's polynomial $L(z)$ is given by \eqref{E-Leh}.
This shows that  $\l_{\min}(F^{(2)}))\le\log(1.176\dots)$. Because the $\mathcal M(F^{(n)})$'s are nested, it follows that $\l_{\min}(F^{(n)}))\le m(L(z))$ for all $n\ge 3$.

Of course, being closed and clearly bounded, ${\mathcal M}(F)$ has a maximal element as well;
let us call it $\l_{\max}(F)$. For instance, since $m(z^4+z^2-z-1)=\log(1.75487766624669)=2m(z^3-z-1)$, we have $\l_{\max}(F^{(2)})\ge 2\l_{\min}(F^{(2)})$.

\section{Questions and conjectures}\label{S-Q}

\bee 
\item[1.] Which points of ${\mathcal M}(F)$ are true limit points? For such a limit point, $m(F_A)$ say,  there is a sequence $(\r^{(n)})$ for which  $m\left(F_{\r^{(n)}}\right)\to m(F_A)$ with all $m\left(F_{\r^{(n)}}\right)$ distinct. This question has been considered by Boyd \cite[Appendix 2]{BoSpec} for the polynomial $F(z_1,z_2)=1+z_1+z_2$. See also the discussion in \cite[Section 6]{BM}.

\item[2.] Given $k\ge 2$,  does there exist a integer polynomial $F$ in $k$ variables   and of dimension $k$ --- see Section \ref{S-prelims} for the definition --- for which the elements of the multiset 
$\{m(F_H)\, : \, H\in\cup_{\l=0}^k\Z^{\l\times k}, \text{  $H$ of rank $\l$ in SHNF} \}$ 
from Theorem \ref{T-2} are all distinct?

\ene
\

 I make the following conjectures concerning Boyd's set $\mathcal L$ of Mahler measures $m(F)$, where $F$ is an integer polynomial in any number of variables.

 \begin{conj}\label{Conj-1} In any sequence $(m(F_n))_{n\in\N,F_n\in\mathcal L}$ where only finitely many of the $m(F_n)$ belong to any one set $\mathcal M(F)$ ($F\in\mathcal L$) we have that $m(F_n)\to\infty$ as $n\to\infty.$
\end{conj}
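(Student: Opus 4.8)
The plan is to prove the contrapositive. Suppose $(m(F_n))_{n\in\N}$ is a sequence in $\mathcal L$ in which only finitely many terms lie in any single set $\mathcal M(F)$ ($F\in\mathcal L$), but $(m(F_n))$ does \emph{not} tend to $\infty$. Then some subsequence is bounded, say by a constant $C$, and I want to derive a contradiction. The key structural input is Proposition \ref{P-nest}: $\mathcal L=\bigcup_{n\ge 1}\mathcal M(F^{(n)})$ is a \emph{nested} union. So each term $m(F_n)$ of my bounded subsequence lies in $\mathcal M(F^{(n(j))})$ for some index $n(j)$, and by nestedness it lies in $\mathcal M(F^{(N)})$ for every $N\ge n(j)$. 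The hypothesis forces $n(j)\to\infty$ (otherwise infinitely many terms would lie in one fixed $\mathcal M(F^{(N)})$, contradicting the assumption).

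Next I would exploit compactness together with Theorem \ref{T-1} (each $\mathcal M(F^{(N)})$ is closed). Since the bounded subsequence lies in $[0,C]$, pass to a further subsequence converging to some limit $\mu\in[0,C]$. The issue is that the sets $\mathcal M(F^{(N)})$ containing our points are growing, so a priori $\mu$ need not lie in any single $\mathcal M(F^{(N)})$. Here is where I would look for the real leverage: I would try to show that for the specific polynomials $F^{(N)}(\z_{2N})=z_1+z_3+\cdots+z_{2N-1}-(z_2+\cdots+z_{2N})$, a convergent sequence of measures $m(F^{(N_j)}_{A_j})$ with $N_j\to\infty$ but all measures bounded can, after suitable reductions (e.g.\ passing to SHNF via Theorem \ref{T-2}, and discarding repeated or vanishing columns), be realized inside a \emph{fixed} $\mathcal M(F^{(N_0)})$ — i.e.\ the ``effective number of distinct monomials'' used is bounded when the measure is bounded. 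If that holds, then infinitely many of the $m(F_n)$ land in one $\mathcal M(F^{(N_0)})$, the desired contradiction.

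The plan for that boundedness step is to invoke Proposition \ref{P-lbd}: for $F^{(N)}_A$, the exponent polytope $\mathcal C(F^{(N)}_A)$ is the convex hull of the columns of $A$ (each column a lattice point, with coefficient $\pm1$ before cancellation), and $m(F^{(N)}_A)$ is bounded below by $0$ and above by $\log(\text{length})$. A bounded \emph{length} would immediately give the conclusion via Theorem \ref{T-1.75}, but length can a priori be large with small measure only through cancellation. The substantive claim to establish is therefore: if $m(F^{(N)}_A)\le C$ then $F^{(N)}_A$ is equivalent (same Mahler measure, via Proposition \ref{P-1} and Corollary \ref{C-cyccc}) to some $F^{(N_0)}_{A_0}$ with $N_0=N_0(C)$ bounded. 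I would attempt this by combining the Mignotte/Stewart-type length lower bound $2^{1/(2B)}$ for noncyclotomic one-variable specializations (quoted after Theorem \ref{T-1.75}) with the density of $\mathcal M_1(F)$ in $\mathcal M(F)$ from Theorem \ref{T-1}: a bounded positive Mahler measure forces the reduced length to be bounded, while the cyclotomic-type factors contribute $0$ and can be stripped off by Corollary \ref{C-cyccc}.

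The main obstacle, I expect, is precisely this last reduction — controlling cancellation. It is conceivable that $F^{(N)}_A$ has enormous length yet small Mahler measure without any cyclotomic factor to strip, if the cancellation is ``generic'' rather than coming from an obvious algebraic identity; ruling this out, or showing it still reduces to bounded $N$, is the crux and is essentially a quantitative strengthening of the isolation phenomenon of Theorem \ref{T-1.5}. If that reduction cannot be pushed through in full generality, a fallback is to prove the conjecture only under an added hypothesis (e.g.\ bounded length, where it follows immediately from Theorem \ref{T-1.75}, or bounded number of monomials, where it follows from \cite[Theorem 2]{DS}), and to record the general statement as the open problem it is. Given the phrasing in the excerpt — this appears as a \emph{conjecture}, not a theorem — I anticipate that the author does not in fact supply a proof, and the honest ``proof proposal'' is the reduction strategy above together with an identification of the cancellation problem as the genuine barrier.
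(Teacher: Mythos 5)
There is nothing in the paper to compare your proposal against: the statement is Conjecture \ref{Conj-1}, stated without proof; the paper only records (in the paragraph following it) that Conjecture \ref{Conj-1} implies Boyd's conjecture, via exactly the compactness-plus-closedness observation you make in your second paragraph, but in the opposite logical direction. Your closing assessment --- that the author supplies no proof and that the statement is genuinely open --- is correct.

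As for the strategy you sketch, the gap you flag is real and is worth naming more sharply, because the step you need is not merely ``a quantitative strengthening of Theorem \ref{T-1.5}'': the claim that every $F^{(N)}_A$ with $m(F^{(N)}_A)\le C$ can be realized inside a fixed $\mathcal M(F^{(N_0(C))})$ says that $\mathcal L\cap[0,C]\subseteq\mathcal M(F^{(N_0(C))})$ for every $C$, which (since each $\mathcal M(F^{(N_0)})$ is closed by Theorem \ref{T-1}) already implies that $\mathcal L$ is closed, i.e.\ Boyd's conjecture, and indeed implies Conjecture \ref{Conj-1} itself. So the proposed reduction does not reduce the conjecture to anything weaker; it replaces it by a statement at least as strong, and the ``contrapositive plus nestedness'' framing, while correct bookkeeping, does no mathematical work. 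A second concrete problem is the use of the Mignotte/Stewart bound: that result gives $M(f)\ge 2^{1/(2B)}$ for noncyclotomic $f$ of length at most $B$, i.e.\ it bounds the measure \emph{below} in terms of the length; it cannot be inverted to conclude that bounded measure forces bounded length, and in fact bounded measure with unbounded length is exactly the situation one cannot currently rule out once the cancellation is not accounted for by cyclotomic factors removable via Corollary \ref{C-cyccc}. So the honest content of your proposal is the fallback you state: the bounded-length case (Theorem \ref{T-1.75}) and the bounded-number-of-monomials case (\cite[Theorem 2]{DS}) are known, and the general statement remains open.
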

\

If this conjecture is true, then any convergent sequence in $\mathcal L$ must have an infinite convergent subsequence in some $\mathcal M(F)$. As $\mathcal M(F)$ is closed, by Theorem \ref{T-1}, the sequence's limit is in $\mathcal M(F)$, and so is certainly in $\mathcal L$. This shows that Conjecture \ref{Conj-1} implies Boyd's conjecture.

 \begin{conj}\label{Conj-2} Suppose that we have a sequence $(m(F_n))_{n\in\N}$ where the $F_n\in\mathcal L$ are irreducible and $\dim(F_n)\to\infty$ as $n\to\infty.$ Then only finitely many of the $m(F_n)$ belong to any one set $\mathcal M(F)$ ($F\in\mathcal L$).
\end{conj}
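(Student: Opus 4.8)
The plan is to show that if $F\in\mathcal L$ is irreducible and $m(F)=m(G)$ for some fixed $G$, then $\dim(F)$ is bounded in terms of $G$ alone, which immediately gives the conjecture (the contrapositive: if $\dim(F_n)\to\infty$, then for any fixed $G$ only finitely many $F_n$ can have $m(F_n)\in\mathcal M(G)$). So fix $G=G(\z_k)$. By Theorem \ref{T-2}, every element of $\mathcal M(G)$ is $m(G_H)$ for some $H\in\Z^{\l\times k}$ of rank $\l\le k$ in Saturated Hermite Normal Form. I would first argue that $\dim(G_H)\le\dim(G)\le k$: applying a homomorphism $\z_k\mapsto\z_\l^H$ to the monomials of $G$ maps the exponent polytope $\mathcal C(G)$ linearly (via $H$) onto $\mathcal C(G_H)$, so $\dim(G_H)=\dim(H^{\mathsf T}\mathcal C(G))\le\dim\mathcal C(G)=\dim(G)$. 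Hence \emph{every} polynomial of the form $G_H$ has dimension at most $k$.

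The next step is the arithmetic heart of the argument: if $F$ is irreducible over $\Q$ (or $\Z$) and $m(F)=m(G_H)$, I want to conclude that $\dim(F)\le\dim(G_H)\le k$. This is where one needs a statement that Mahler measure, restricted to irreducible integer polynomials, "remembers" (a bound on) the dimension. The clean way is: for each $G_H$ in the list above, factor it into irreducibles $G_H=\pm\z^{\mathbf v}\prod_j P_j$, where the $P_j$ are irreducible non-monomial factors. Then $m(G_H)=\sum_j m(P_j)$, and by Proposition \ref{P-lbd} (or rather the additivity of the exponent polytope under products, $\mathcal C(G_H)=\sum_j\mathcal C(P_j)$ as a Minkowski sum, after translating away the monomial factor) each $m(P_j)$ arises from an irreducible polynomial of dimension $\le\dim(G_H)\le k$. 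So for a fixed $G$, the set of values $m(F)$ with $F$ irreducible and $m(F)\in\mathcal M(G)$ is contained in the (countable) set of values $m(P)$ where $P$ ranges over \emph{irreducible} integer polynomials of dimension at most $k$. If $F$ is irreducible with $m(F)$ in this set, I still must pass from "$m(F)$ equals some $m(P)$ with $\dim(P)\le k$" to "$\dim(F)\le k$" — and in fact that last implication is false in general (different irreducible polynomials can share a Mahler measure, e.g. $F$ and $F$ with variables permuted or with a unimodular substitution, but more to the point two genuinely unrelated irreducibles might coincide in measure). So the statement to prove is really just: for fixed $G$, only finitely many \emph{values} $m(F_n)$ lie in $\mathcal M(G)$ — no, that is also false since $\mathcal M(G)$ can be infinite. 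Re-reading the conjecture: it asks that only finitely many of the $m(F_n)$ \emph{in the given sequence} lie in one $\mathcal M(G)$; since the $\dim(F_n)\to\infty$, and membership $m(F_n)\in\mathcal M(G)$ forces $m(F_n)=m(G_H)=\sum_j m(P_{j})$ with all $\dim(P_j)\le k$, the point is that $m(F_n)$ being a \emph{single} irreducible's measure, while also being such a sum, constrains $F_n$.

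So the sharpened plan is: (i) reduce, via Theorem \ref{T-2} and the Minkowski-sum identity $\mathcal C(G_H)=\sum_j\mathcal C(P_j)$ for the irreducible non-monomial factors $P_j$ of $G_H$, to showing that the value $m(F_n)$ must equal $\sum_j m(P_j)$ where $\sum_j\dim(P_j)$ is bounded (indeed $\dim(G_H)\le k$ and dimensions are superadditive under Minkowski sum of full-dimensional summands, so the \emph{number} of non-monomial factors $P_j$ is at most $k$, and each has dimension $\le k$); (ii) invoke irreducibility of $F_n$ together with a lower bound à la Proposition \ref{P-lbd} or Mignotte/Dobrowolski to show that an irreducible $F_n$ of very large dimension cannot have its measure represented as a short sum $\sum_j m(P_j)$ of measures of bounded-dimension irreducibles — because $m(F_n)$ would have to be small relative to $\dim(F_n)$, contradicting that high-dimensional irreducibles have measure bounded below by a function that does \emph{not} depend on dimension only if... here is the crux. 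The main obstacle, which I expect to be the genuinely hard step, is exactly (ii): there is no known lower bound for $m(F)$ in terms of $\dim(F)$ for irreducible $F$ (such a bound would resolve Lehmer-type questions in a strong form), so Conjecture \ref{Conj-2} cannot be proved this way — and indeed the paper presents it only as a conjecture. A \emph{conditional} or \emph{heuristic} version of the plan would replace (ii) with the assumption that irreducible integer polynomials of dimension $d$ have Mahler measure bounded below by some $c(d)\to\infty$, which is itself open; absent that, the plan reduces the conjecture to this dimension-vs-measure growth principle, which I would isolate as the essential difficulty rather than attempt to prove.
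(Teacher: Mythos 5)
This statement is Conjecture \ref{Conj-2}: the paper offers no proof of it, and it is presented precisely because it is open (the paper only observes that, together with Conjecture \ref{Conj-1}, it would answer Boyd's question from \cite[p. 461]{BoSpec}). So there is no argument in the paper to compare yours against, and your proposal does not close the gap either --- by your own admission, step (ii) is exactly the missing ingredient: a lower bound $c(d)\to\infty$ for Mahler measures of irreducible integer polynomials of dimension $d$ is itself an open, Lehmer-type statement (indeed essentially the question of Boyd that this conjecture is aimed at), so reducing Conjecture \ref{Conj-2} to it is circular as a proof strategy, even if it is a reasonable way to locate the difficulty. The fundamental obstruction, which you do identify, is that membership $m(F_n)\in\mathcal M(G)$ is an equality of real numbers and carries no structural relation between $F_n$ and $G_H$, so nothing currently known transfers a bound on $\dim(G_H)$ to a bound on $\dim(F_n)$.

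Two smaller points in the intermediate reasoning are also off. First, your claim that the number of non-monomial irreducible factors of $G_H$ is at most $k$ because ``dimensions are superadditive under Minkowski sum'' is false: the dimension of a Minkowski sum is the dimension of the span of the summands' directions, not a sum of dimensions, and $G_H$ can be a product of arbitrarily many one-variable irreducibles while $\dim(G_H)=1$. (What is true, and what you use correctly elsewhere, is $\dim(P_j)\le\dim(G_H)\le\dim(G)$ for each factor, since $\mathcal C(G_H)$ is the Minkowski sum of the $\mathcal C(P_j)$ and $\mathcal C(G_H)=H^{\mathsf T}\mathcal C(G)$ up to translation.) Second, even granting a bound on the number and dimensions of the $P_j$, the passage from ``$m(F_n)$ equals such a sum'' to a constraint on $F_n$ is exactly the unproved step, so the factorization bookkeeping does not advance the argument. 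In short: the proposal is an honest reduction of one open problem to another, not a proof, which is consistent with the statement's status in the paper as a conjecture.
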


\

If both conjectures are true, we see that any sequence $\{m(F_n)\}_{n\in\N}$ where the $F_n\in\mathcal L$ are irreducible and $\dim(F_n)\to\infty$ as $n\to\infty$ would have the property that $m(F_n)\to\infty$ as $n\to\infty.$ This would answer affirmatively a question asked by Boyd \cite[p. 461]{BoSpec}.

\section{Acknowledgements.} 
I thank the referee for carefully reading the manuscript, and pointing out some infelicities.
I also thank Val\' erie Flammang for an exchange of emails on the Mahler measure of trinomials,
 which got me thinking about limits of Mahler measures again. 

This paper has its origins in my visit to UBC, Vancouver, B.C., in 1979-80, at the invitation of David Boyd. As you see, its gestation time has been considerable!
 I (very belatedly) thank him in print for the opportunity to work with him, which greatly stimulated my research. More generally, I am very grateful to him for his inspiration, support and friendship over the years. 

 Finally, I thank Hendrik Lenstra for `saturated'.

\end{document}